\documentclass[a4paper,12pt]{amsart}
\usepackage{amsmath}
\usepackage{amssymb}
\usepackage{amsfonts}
\usepackage{amscd}
\usepackage{mathrsfs}
\usepackage{paralist}
\usepackage{url}
\usepackage{enumerate}
\usepackage[all]{xy}
\usepackage[dvipdfmx]{graphicx}
\usepackage{graphics}
\usepackage{tikz}
\everymath{\displaystyle}
\setlength{\topmargin}{0in}
\setlength{\oddsidemargin}{0in}
\setlength{\evensidemargin}{0in}
\setlength{\textwidth}{6.3in}
\setlength{\textheight}{9.15in}
\setlength{\footskip}{0.25in}
\setlength{\parskip}{3mm}


%
\theoremstyle{plain} 
\newtheorem{theorem}{\indent\bf Theorem}[section]
\newtheorem{lemma}[theorem]{\indent\bf Lemma}

\newtheorem{proposition}[theorem]{\indent\bf Proposition}
\newtheorem{claim}[theorem]{\indent\bf Claim}

\theoremstyle{definition} 

\newtheorem{remark}[theorem]{\indent\rm Remark}
\newtheorem{example}[theorem]{\indent\rm Example}

\newtheorem*{theorem*}{\indent\bf Theorem}

\newcommand{\del}{\partial}
\newcommand{\delbar}{\overline{\partial}}
\newcommand{\ddbar}{\del\delbar}

\def\Re{\text{Re\,}} 
\def\Im{\text{Im\,}} 
\newcommand{\C}{\mathbb{C}}

\newcommand{\R}{\mathbb{R}}
\newcommand{\N}{\mathbb{N}}
\newcommand{\Z}{\mathbb{Z}}

\renewcommand{\l}{\ell}
\renewcommand{\i}{\mathrm{i}}
\renewcommand{\H}{\mathbb{H}}
\newcommand{\hsp}{\hspace{2mm}}

\renewcommand{\hsp}{\hspace{2mm}}
\newcommand{\lam}{\lambda}
\newcommand{\tlam}{\tilde{\lambda}}

\newcommand{\eps}{\varepsilon}

\newcommand{\dist}{{\rm{dist}}}
\newcommand{\Supp}{{\rm Supp}\,}
\newcommand{\Ker}{{\rm Ker}}
\newcommand{\Image}{{\rm Im}}
\renewcommand{\bar}{\overline}

\renewcommand{\tilde}{\widetilde}
\renewcommand{\hat}{\widehat}
\renewcommand{\i}{\sqrt{-1}}

\begin{document}

\pagestyle{plain}
\thispagestyle{plain}

\title[Cohomology groups with compact support for flat line bundles on certain complex Lie groups]
{Cohomology groups with compact support for flat line bundles on certain complex Lie groups}

\author[Takayuki KOIKE and Jinichiro Tanaka]{Takayuki KOIKE$^{1}$ and Jinichiro Tanaka$^{2}$}
\address{ 
$^{1}$ Department of Mathematics \\
Graduate School of Science \\
Osaka Metropolitan University \\
3-3-138 Sugimoto \\
Osaka 558-8585 \\
Japan 
}
\email{tkoike@omu.ac.jp}
\address{
$^{2}$ Department of Mathematics \\
Graduate School of Science \\
Osaka Metropolitan University \\
3-3-138 Sugimoto \\
Osaka 558-8585 \\
Japan 
}
\email{sw23876x@st.omu.ac.jp}
\renewcommand{\thefootnote}{\fnsymbol{footnote}}
\footnote[0]{ 
2020 \textit{Mathematics Subject Classification}.
32F32, 53C55
}
\footnote[0]{ 
\textit{Key words and phrases}.
Toroidal groups, flat line bundles. 
}
\renewcommand{\thefootnote}{\arabic{footnote}}
\begin{abstract} 
Let $X$ be a complex surface obtained as the quotient of the complex Euclidean space $\mathbb{C}^2$ by a discrete subgroup of rank $3$. 
We investigate the cohomology group $H_0^1(X, E)$ with compact support for a unitary flat line bundle $E$ over $X$. 
We show the vanishing of $H_0^1(X, E)$ for a certain class of such pairs $(X, E)$, 
which includes infinitely many examples such that $H^1(X, E)$ is non-Hausdorff and infinite dimensional.
\end{abstract}

\maketitle

\section{Introduction}\label{section:introduction}

A {\it toroidal group} is a complex manifold obtained as the quotient $\C^n/\Gamma$ of the complex Euclidean space $\C^n$ by a discrete subgroup $\Gamma\subset \C^n$ which has no non-constant holomorphic function. 
Toroidal groups are classified into {\it toroidal theta groups} and {\it toroidal wild groups} in accordance with irrational number theoretical conditions on $\Gamma$ (see \S2). 

In the following, we consider cohomology groups as Dolbeault cohomology groups.  
For a toroidal group $X$, the cohomology group $H^k(X, \mathcal{O}_X)$ is determined by Kazama \cite{Kazama}. 
The dimension of $H^k(X, \mathcal{O}_X)$ is a finite number which can be explicitly described when $X$ is a toroidal theta group, whereas $H^k(X, \mathcal{O}_X)$ is non-Hausdorff and infinite dimensional when $X$ is a toroidal wild group. 
As a variant of Kazama's theorem, Abe determined the cohomology group $H^k(X, E)$ for a unitary flat line bundle $E$ over a toroidal group $X$ \cite{Abetoro}. 

In the present paper, we investigate the cohomology group $H_0^1(X,E)$ with compact support for $X$ and $E$ as in the following.
\begin{example}\label{eg:main}
For $\tau\in\H:=\{z\in \mathbb{C}\mid {\rm Im}\,z>0\}$ and real numbers $p$ and $q$, set $X:= X_{\tau,p,q}:= \mathbb{C}^2/\Gamma$, where $\Gamma := \Gamma_{\tau,p,q} \subset \mathbb{C}^2$ is the discrete group defined by 
\[
\Gamma_{\tau,p,q}:= \left\langle 
\begin{pmatrix}
0 \\
1 \\
\end{pmatrix},\ 
\begin{pmatrix}
1 \\
p \\
\end{pmatrix},\ 
\begin{pmatrix}
\tau \\
q \\
\end{pmatrix}
\right\rangle.
\]
For $\theta_1$ and $\theta_2 \in \R$, denote by $E:=E_{\theta_1,\theta_2}$ the unitary flat line bundle over $X$ which corresponds to the unitary representation $\rho$ of the fundamental group $\pi_1(X,\ast)= \Gamma$ defined by 
\[
\rho\left(\begin{pmatrix}
0 \\
1 \\
\end{pmatrix}\right) :=1 ,\quad
\rho\left(\begin{pmatrix}
1 \\
p \\
\end{pmatrix}\right) :=e^{2\pi\i\theta_1} ,\quad{\rm and}\ 
\rho\left(\begin{pmatrix}
\tau \\
q \\
\end{pmatrix}\right) :=e^{2\pi\i\theta_2}. 
\]
\end{example}

Note that the complex surface $X$ in Example \ref{eg:main} is not necessarily a toroidal group (see Lemma \ref{toroidaliff}). 

For Example \ref{eg:main}, we show the following:
\begin{theorem}\label{thm:main}
Let $\tau$ be an element of $\H$ and $p, q, \theta_1$,and $\theta_2$ real numbers.
Assume either of the following for any $n\in\Z \colon$ 
$\theta_1+np\notin\Z$ or $\theta_2+nq\notin\Z$. 
Then $H_0^1(X, E)=0$ holds for $X=X_{\tau,p,q}$ and $E=E_{\theta_1,\theta_2}$ as in Example \ref{eg:main}. \qed
\end{theorem}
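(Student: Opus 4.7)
The plan is to reduce the vanishing of $H_0^1(X, E)$ to the vanishing $H^0(T, L_n) = 0$ for countably many flat line bundles $L_n$ on the elliptic curve $T := \C/\langle 1, \tau\rangle$, via Fourier expansion along the direction of $\gamma_0$. First I would view $X$ as a principal $\C^*$-bundle $\pi \colon X \to T$: the quotient $\C^2/\langle \gamma_0\rangle$ is biholomorphic to $\C \times \C^*$ through $(z_1, z_2) \mapsto (z_1, e^{2\pi\i z_2})$, and the remaining generators $\gamma_1, \gamma_2$ cover the $\Z^2$-translation on $\C$ while scaling the $\C^*$-fiber by $e^{2\pi\i p}$ and $e^{2\pi\i q}$. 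The function $y := \Im z_2$ descends to $X$, so a compactly supported $(0,1)$-form is supported in a slab $\{|y|\le A\}$. Since $\rho(\gamma_0) = 1$, a $\delbar$-closed compactly supported $(0,1)$-form with values in $E$ lifts to such a form $\alpha = u\, d\bar z_1 + v\, d\bar z_2$ on $\C^2$ that is $1$-periodic in $x := \Re z_2$, and I would Fourier-expand
\[
u = \sum_{n \in \Z} u_n(z_1, y)\, e^{2\pi\i n x}, \qquad v = \sum_{n \in \Z} v_n(z_1, y)\, e^{2\pi\i n x}.
\]
The equivariance of $\alpha$ under $\gamma_1, \gamma_2$ forces $u_n(\cdot, y), v_n(\cdot, y)$ to descend (for each fixed $y$) to smooth sections of the unitary flat line bundle $L_n$ on $T$ attached to the representation $1 \mapsto e^{2\pi\i(\theta_1 - np)}$, $\tau \mapsto e^{2\pi\i(\theta_2 - nq)}$. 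After replacing $n$ by $-n$, the hypothesis is equivalent to the non-triviality of every $L_n$, and hence $H^0(T, L_n) = 0$ for all $n \in \Z$.

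Writing $f = \sum_n f_n(z_1, y)\, e^{2\pi\i n x}$, the equation $\delbar f = \alpha$ decomposes as
\[
\frac{\del f_n}{\del \bar z_1} = u_n, \qquad \left(\frac{\del}{\del y} + 2\pi n\right) f_n = -2\i v_n,
\]
together with the integrability relation $(\del_y + 2\pi n)u_n = -2\i\, \del v_n/\del \bar z_1$ coming from $\delbar\alpha = 0$. I would solve the second equation by the explicit formula
\[
f_n(z_1, y) := -2\i \int_{-\infty}^y e^{-2\pi n(y-s)} v_n(z_1, s)\, ds \quad (n \ge 0)
\]
and the symmetric $f_n(z_1, y) := 2\i \int_y^\infty e^{-2\pi n(y-s)} v_n(z_1, s)\, ds$ for $n < 0$. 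The first equation then follows from the integrability relation via one integration by parts.

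The main obstacle is to show that each $f_n$ has compact support in $y$. For $n \ge 0$ and $y > A$ the formula reduces to $f_n(z_1, y) = -2\i\, e^{-2\pi n y} V_n(z_1)$, where $V_n(z_1) := \int_{-\infty}^\infty e^{2\pi n s} v_n(z_1, s)\, ds$. Integrating by parts against the integrability relation shows $\del V_n/\del \bar z_1 = 0$, so $V_n$ is holomorphic in $z_1$; combined with the $L_n$-equivariance inherited from $v_n$, this makes $V_n$ a holomorphic section of $L_n$ over $T$, which must vanish by hypothesis. Hence $f_n$ vanishes outside $\{|y|\le A\}$, and the $n < 0$ case is analogous. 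Finally, since $v \in C^\infty_c$, the Fourier coefficients $v_n$ decay faster than any polynomial in $n$ uniformly on compact sets, and the same follows for $f_n$ from the explicit formula, so $f := \sum_n f_n\, e^{2\pi\i n x}$ defines a smooth compactly supported section of $E$ over $X$ satisfying $\delbar f = \alpha$, establishing $H_0^1(X, E) = 0$.
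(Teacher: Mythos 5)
Your proposal is correct, and it takes a genuinely different route from the paper. The paper's proof of Theorem \ref{thm:main} is non-constructive: it shows that $B^{0,1}(E)$ is dense in $Z^{0,1}(E)$ (via the \v{C}ech--Dolbeault correspondence, Laurent expansion of the transition cocycle along the $\C^\ast$-fiber, and the vanishing $H^1(C, F_C^n\otimes E_C)=0$ on the elliptic curve), deduces $(H^1(X,E))^\vee=0$, and then invokes the Serre duality theorem for non-compact manifolds to conclude $H^1_0(X,E)\cong (H^1(X,E^{-1}))^\vee=0$. You instead solve the compactly supported $\delbar$-equation directly: Fourier expansion in $x=\Re z_2$ reduces it to the family of transport equations $(\del_y+2\pi n)f_n=-2\i v_n$, the explicit one-sided integral kills the support on one end automatically, and the potential obstruction on the other end is the holomorphic section $V_n\in H^0(C,L_n)$, which vanishes because $L_n$ is a non-trivial flat bundle --- the $H^0$-vanishing on $C$ that is Serre-dual to the $H^1$-vanishing the paper uses. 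Your argument is more elementary (no topological vector space duality, no H\"ormander estimates) and, notably, it is a \emph{direct} proof valid for arbitrary $p$ and $\theta_1$, whereas the paper's direct construction in \S\ref{section:proof} is restricted to $p=\theta_1=0$ and needs $L^2$ methods plus a separate compact-support argument (Lemma \ref{lem:vanishing_of_H0_on_nbhd_of_C}). What the paper's duality approach buys in exchange is a statement in greater generality (any $\C^\ast$-bundle situation satisfying conditions $(i)$--$(iii)$ of \S\ref{notation:general}) and the dual information that $(H^1(X,E))^\vee=0$ even when $H^1(X,E)$ itself is infinite dimensional and non-Hausdorff. The only places where your write-up is terse --- the rapid decay of $f_n$ and of its $y$-derivatives in $n$ (via $\del_y f_n=-2\i v_n-2\pi n f_n$), and the check that the equivariance of $v_n$ transfers to $f_n$ so that $f$ descends to $X$ --- are routine and do not hide any gap.
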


Note that $H^0(X,E)$ vanishes under the assumption in Theorem \ref{thm:main} for $X=X_{\tau,p,q}$ and $E=E_{\theta_1,\theta_2}$ (see Lemma \ref{H0XE}). 
A pair $(X,E)$ as in Example \ref{eg:main} includes many examples such that $H^1(X,E)$ is non-Hausdorff and infinite dimensional. 
For example, when $X=X_{\tau,0,q}$ is a toroidal group and $E=E_{0,\theta_2}$ is a non-trivial unitary flat bundle, the cohomology group $H^1(X, E)$ becomes either $0$ or, non-Hausdorff and infinite dimensional under the assumption in Theorem \ref{thm:main} (see Remark \ref{rmk:general}). 

We sketch the proof of Theorem \ref{thm:main}. 
Denote by $\mathscr{A}_{C^\infty}^{p,q}(E)$ the space of $E$-valued $(p,q)$-forms of class $C^\infty$ on $X$. 
We consider $\mathscr{A}_{C^\infty}^{p,q}(E)$ with the topology of the uniform convergence of all the derivatives of coefficients on compact sets.
We show that the space $B^{0,1}(E)$ of $(0,1)$-coboundaries is dense in the space $Z^{0,1}(E)$ of $(0,1)$-cocycles via the \v{C}ech--Dolbeault correspondence, from which the vanishing of the topological dual space $(H^1(X,E))^\vee$ follows. 
The Serre duality theorem implies that $H_0^1(X,E)$ is isomorphic to $(H^1(X,E^{-1}))^\vee$. 

In Example \ref{eg:main} with $p=\theta_1=0$, we give another proof of Theorem \ref{thm:main} by directly constructing a solution of the $\delbar$-equation. 
The approach is as follows: 
We take $[f] \in H^1_0(X,E)$ which is represented by an $E$-valued $C^\infty$ $(0, 1)$-form $f$ on $X$ with compact support. 
As is observed in \S \ref{obs}, there is a covering map $p\colon (\C^\ast)^2 \to X$, where $\C^\ast := \C\setminus \{0\}$. 
Define a $(0,1)$-form $\widetilde{f}$ on $(\C^\ast)^2$ by letting $\widetilde{f}:=p^*f$. 
For constructing a smooth section $g$ of $E$ with compact support such that $\delbar g = f$ holds, 
we will solve the $\delbar$-equation $\delbar \widetilde{g} = \widetilde{f}$ on $(\C^\ast)^2$ so that the solution $\widetilde{g}$ enjoys a periodic condition. 
For this purpose, we will construct a suitable K\"ahler metric and a weight function on $(\C^\ast)^2$ and apply H\"ormander's theorem on the existence of a solution of a $\delbar$-equation with an $L^2$ estimate. 


\vskip3mm
{ Organization of the paper. }
In \S \ref{toroidal}, we review some known results for $X$ and $E$ as in Example \ref{eg:main}. 
In \S \ref{general:proof}, we prove Theorem \ref{thm:main}. 
In \S \ref{section:proof}, we give another proof of Theorem \ref{thm:main} in the restricted case of Example \ref{eg:main}.

\vskip3mm
{ Acknowledgement. }
The authors would like to give heartful thanks to Prof. Takeo Ohsawa and Prof. Shigeharu Takayama for helpful comments. 
The first author is supported by JSPS KAKENHI Grant Number 23K03119. 
The second author is supported by JST, the establishment of university fellowships towards the creation of science technology innovation, Grant Number JPMJFS2138. 
This work was supported by MEXT Promotion of Distinctive Joint Research Center Program JPMXP0723833165, and the Research Institute for Mathematical Sciences, an International Joint Usage/Research Center located in Kyoto University.

\section{Some fundamentals}\label{toroidal}

In this section, we explain some known results for $X=X_{\tau,p,q}$ and $E=E_{\theta_1,\theta_2}$ as in Example \ref{eg:main}. 

\begin{lemma}\label{H0XE}
Let $X=X_{\tau,p,q}$ and $E=E_{\theta_1,\theta_2}$ be as in Example $\ref{eg:main}$. 
Then $H^0(X,E)$ vanishes under the assumption in Theorem \ref{thm:main}. 
\end{lemma}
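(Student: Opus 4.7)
The plan is to identify $H^0(X,E)$ with the space of $\rho$-equivariant entire functions on $\C^2$ and then to decompose such a function by Fourier expansion in the $z_2$ variable.

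More precisely, I would first note that a global section of $E$ corresponds to a holomorphic function $F\colon \C^2\to \C$ satisfying
\begin{align*}
F(z_1,z_2+1) &= F(z_1,z_2),\\
F(z_1+1,z_2+p) &= e^{2\pi\i\theta_1}F(z_1,z_2),\\
F(z_1+\tau,z_2+q) &= e^{2\pi\i\theta_2}F(z_1,z_2).
\end{align*}
The first identity says $F$ descends to a holomorphic function on $\C\times\C^{\ast}$ through $(z_1,z_2)\mapsto(z_1,e^{2\pi\i z_2})$, so it admits a Laurent/Fourier expansion
\[
F(z_1,z_2)=\sum_{n\in\Z}a_n(z_1)\,e^{2\pi\i n z_2}
\]
with $a_n$ holomorphic on $\C$ and the series converging uniformly on compact sets.

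Substituting this expansion into the other two transformation laws and comparing coefficients (which is legitimate by uniqueness of Laurent expansion), I would obtain
\[
a_n(z_1+1)=e^{2\pi\i(\theta_1-np)}a_n(z_1),\qquad
a_n(z_1+\tau)=e^{2\pi\i(\theta_2-nq)}a_n(z_1).
\]
Both twisting factors have unit modulus, hence $|a_n|$ is invariant under the lattice $\Z+\Z\tau$ and therefore descends to a continuous function on the compact torus $\C/(\Z+\Z\tau)$. In particular $|a_n|$ is bounded on $\C$, so Liouville's theorem gives that $a_n$ is constant.

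With $a_n$ constant, the two identities above reduce to $a_n=e^{2\pi\i(\theta_1-np)}a_n$ and $a_n=e^{2\pi\i(\theta_2-nq)}a_n$. Thus $a_n\neq 0$ forces both $\theta_1-np\in\Z$ and $\theta_2-nq\in\Z$. Setting $m=-n$, this says $\theta_1+mp\in\Z$ and $\theta_2+mq\in\Z$ simultaneously, contradicting the hypothesis of Theorem \ref{thm:main}. Hence every $a_n$ vanishes and $F\equiv 0$, proving $H^0(X,E)=0$. The only potential subtlety is verifying the Fourier decomposition and the comparison of coefficients, but this is routine via the Laurent expansion on $\C\times\C^{\ast}$.
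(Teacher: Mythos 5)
Your proof is correct and follows essentially the same route as the paper: both expand a section in a Laurent/Fourier series along the fiber direction of the $\C^\ast$-bundle $X\to \C/\langle 1,\tau\rangle$ and reduce the statement to the nonexistence of holomorphic sections of the nontrivial flat line bundles $F_C^{\,n}\otimes E_C$ on the elliptic curve. The only cosmetic difference is that you work equivariantly on the universal cover and verify that last vanishing by hand (unimodular monodromy makes $|a_n|$ doubly periodic, hence bounded, so Liouville applies), whereas the paper phrases the coefficients as sections over $C$ and cites a lemma for the vanishing.
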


\begin{proof}
Denote by $C$ the elliptic curve $\C/\left\langle 1, \tau \right\rangle$. 
Let $\pi \colon X \to C$ be the mapping induced by the first projection $\C^2 \to \C \ ; (z,w)\mapsto z$. 
Note that $\pi \colon X\to C$ can be regarded as a $\C^\ast$-fiber bundle structure on $C$. 
Let $\{U_j\}_j$ be an open covering of $C$. 
Take a local coordinate $z_j$ on $U_j$. 
Let $F_C$ be the flat line bundle over $C$ which corresponds to the unitary representation $\rho_{F_C}$ of $\pi_1(C,\ast) =\left\langle 1, \tau \right\rangle$ defined by letting $\rho_{F_C} (1) = e^{2\pi\i p}$ and $\rho_{F_C} (\tau) = e^{2\pi\i q}$.
Then $X$ can be identified with the complement of the zero section in the total space of $F_C$.
Take fiber coordinates $w_j$ of $F_C|_{U_j}$. 
There exist holomorphic functions $s_{jk}$ on $U_{jk}:=U_j \cap U_k$ such that $w_j = s_{jk}^{-1} w_k$ on $U_{jk}$. 
Let $E_C$ be the flat line bundle over $C$ which corresponds to the unitary representation $\rho_{E_C}$ defined by letting $\rho_{E_C} (1) = e^{2\pi\i\theta_1} $ and $\rho_{E_C} (\tau) = e^{2\pi\i\theta_2}$. 
Note that $E$ can be regarded as the flat line bundle $\pi^\ast E_C$ on $X$. 
Take local trivializations $e_j$ of ${E_C}|_{U_j}$. 
There exist holomorphic functions $t_{jk}$ on $U_{jk}$ such that $e_j= t_{jk}^{-1} e_k$ on $U_{jk}$. 
Take $f\in H^0(X,E)$. 
We can expand $f$ as 
\begin{align*}
f(z_j,w_j) = \left(\sum_{n=-\infty}^\infty a_n^j (z_j) w_j^n \right)\pi^\ast e_j
\end{align*}
on $U_j$ where each $a_n^j$ is a holomorphic function on $U_j$. 
As 
\begin{align*}
f(z_j, w_j) =  \left(\sum_{n=-\infty}^\infty a_n^j (z_j) s_{jk}^{-n} t_{jk}^{-1} w_k^n \right)\pi^\ast e_k
\end{align*} 
on $U_{jk}$, the equation
\begin{align*}
a_n^k(z_k) = s_{jk}^{-n} t_{jk}^{-1} a_n^j (z_j)
\end{align*}
holds on $U_{jk}$. 
Thus, we find that $\{ ( U_j , a_n^j ) \}\in H^0(C, {F_C}^n\otimes {E_C})$ for each $n\in\Z$. 
By the assumption in Theorem \ref{thm:main}, ${F_C}^n\otimes {E_C}$ is a non-trivial flat line bundle. 
We have $H^0(C, {F_C}^n\otimes {E_C})=0$ for each $n\in\Z$ (see \cite[Lemma 2.3]{HaKo} for example). 
Therefore $f\equiv 0$.
\end{proof}

\begin{lemma}\label{toroidaliff}
The complex surface $X_{\tau,p,q}$ is a toroidal group if and only if either $p$ or $q$ is an irrational number. 
\end{lemma}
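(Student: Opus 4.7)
The plan is to prove both directions by separately analyzing the existence of non-constant holomorphic functions on $\mathbb{C}^2/\Gamma_{\tau,p,q}$. First, I would verify that $\Gamma_{\tau,p,q}$ is genuinely a discrete subgroup of rank $3$: writing the three generators as real vectors in $\mathbb{R}^4\cong\mathbb{C}^2$, the $3\times 3$ minor determined by the first three real coordinates has determinant $\tau_2:=\mathrm{Im}\,\tau \neq 0$, so the generators are $\mathbb{R}$-linearly independent. Hence $X_{\tau,p,q}$ is a complex Lie group of the required form, and ``toroidal'' is equivalent to the absence of non-constant holomorphic functions on $X_{\tau,p,q}$.

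For the ``if'' direction, assume $p\notin\mathbb{Q}$ (the case $q\notin\mathbb{Q}$ is handled symmetrically). Let $f$ be a $\Gamma$-invariant holomorphic function on $\mathbb{C}^2$ with coordinates $(z,w)$. Invariance under the first generator $(0,1)^{T}$ gives a Fourier expansion
\[
f(z,w)=\sum_{n\in\mathbb{Z}} f_n(z)\,e^{2\pi\i n w},
\]
with each $f_n$ entire on $\mathbb{C}$. Invariance under $(1,p)^{T}$ and $(\tau,q)^{T}$ yields the transformation rules
\[
f_n(z+1)=e^{-2\pi\i np}f_n(z),\qquad f_n(z+\tau)=e^{-2\pi\i nq}f_n(z),
\]
so $f_n$ is a global holomorphic section of the flat line bundle on the elliptic curve $C=\mathbb{C}/\langle 1,\tau\rangle$ whose monodromy around $1$ and $\tau$ are $e^{-2\pi\i np}$ and $e^{-2\pi\i nq}$. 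By the standard fact cited in the proof of Lemma~\ref{H0XE} (see \cite[Lemma 2.3]{HaKo}), such a space is zero unless the bundle is trivial, i.e., $np\in\mathbb{Z}$ and $nq\in\mathbb{Z}$. Since $p$ is irrational, $np\in\mathbb{Z}$ forces $n=0$, and $f_0$ is then an entire function descending to $C$, hence constant. Therefore $f$ is constant, and $X_{\tau,p,q}$ is toroidal.

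For the ``only if'' direction, assume $p,q\in\mathbb{Q}$ and write $p=a/N$, $q=b/N$ with integers $a,b,N$ and $N\geq 1$. Then the entire function $h(z,w):=e^{2\pi\i N w}$ on $\mathbb{C}^2$ is invariant under each generator: the shifts $w\mapsto w+1$, $w\mapsto w+p$, $w\mapsto w+q$ multiply $h$ by $e^{2\pi\i N}=1$, $e^{2\pi\i a}=1$, $e^{2\pi\i b}=1$ respectively. Hence $h$ descends to a non-constant holomorphic function on $X_{\tau,p,q}$, showing $X_{\tau,p,q}$ is not toroidal.

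The argument is essentially routine Fourier analysis; the only point that would require a moment's care is citing the vanishing of $H^0$ of a non-trivial flat line bundle on the elliptic curve $C$, which is already used in Lemma~\ref{H0XE} and can be invoked here directly.
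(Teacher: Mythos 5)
Your proof is correct, but it takes a genuinely different route from the paper: the paper disposes of this lemma in one line by citing the general irrationality criterion for toroidal groups in Abe--Kopfermann \cite[Theorem~1.1.4]{AK}, whereas you give a direct, self-contained argument. Your ``if'' direction is exactly the Fourier--Laurent decomposition that the paper itself uses later (in the proof of Lemma~\ref{H0XE}) to compute $H^0(X,E)$: expanding a $\Gamma$-invariant entire function in the fiber variable reduces the question to $H^0(C, F_C^{\,n})$ for the flat bundle $F_C$ with multipliers $e^{2\pi\i np}, e^{2\pi\i nq}$, and the cited vanishing \cite[Lemma~2.3]{HaKo} kills every mode except those with $np,nq\in\Z$; irrationality of $p$ or $q$ then leaves only $n=0$, handled by Liouville. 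The ``only if'' direction via the explicit invariant function $e^{2\pi\i Nw}$ is clean and complete, and your preliminary check that the three generators are $\R$-independent (the $3\times3$ minor with determinant $\mathrm{Im}\,\tau$) correctly justifies discreteness. What your approach buys is transparency and consistency with the bundle-theoretic viewpoint the paper adopts everywhere else; what the citation buys is brevity and applicability to general $\C^n/\Gamma$ without redoing the harmonic analysis. Both are valid proofs of the stated equivalence.
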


\begin{proof}
The assertion is a direct consequence of \cite[Theorem $1.1.4$]{AK}. 
\end{proof}

In the following, we assume that $X$ is a toroidal group. 
It is known that the following condition is important to classify $X_{\tau,p,q}$ (see \cite{KaTa} or \cite{Kodel}): 
There exist positive numbers $A$ and $0<\delta<1$ such that, for any positive interger $n$, 
\begin{align*}
\dist ((np,nq), \Z^2)\geq A\delta^n
\end{align*}
holds, where $\dist$ is the Euclidian distance.
If this condition holds, $X$ is said to be a {\it toroidal theta group}. 
Otherwise $X$ is said to be a {\it toroidal wild group}.

We explain some consequences from some known results such as the theorems due to Kazama \cite{Kazama} and Abe \cite{Abetoro} through Example \ref{eg:main}. 
The following is known for the cohomology group $H^k(X,\mathcal{O}_X)$\,\cite{Kazama}. 

\begin{proposition}{\rm (\cite[Theorem $4.3$]{Kazama} for Example \ref{eg:main})}\label{Kazama:dimthm}
The following assertions are equivalent when $X=X_{\tau,p,q}$ as in Example \ref{eg:main} is a toroidal group. 
\begin{enumerate}[$(i)$]
\item $X$ is a toroidal theta group.
\item $\dim H^1(X, \mathcal{O}_{X})=1. $
\item $\dim H^k (X, \mathcal{O}_X) <\infty $ for any $k\geq1$.
\end{enumerate}
Moreover, the space $H^k(X, \mathcal{O}_X)$ is non-Hausdorff and infinite dimensional when $X$ is a toroidal wild group.
\end{proposition}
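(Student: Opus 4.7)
The plan is to derive the statement directly from Kazama's general theorem \cite[Theorem~4.3]{Kazama}, which computes the Dolbeault cohomology of an arbitrary toroidal group $\C^n/\Gamma$ in terms of the numerical invariants of $\Gamma$. Concretely, Kazama shows that for a toroidal theta group whose defining lattice has real rank $n+m$, one has $\dim H^q(X,\mathcal{O}_X)=\binom{m}{q}$, whereas for a toroidal wild group every $H^q(X,\mathcal{O}_X)$ with $q\geq 1$ is non-Hausdorff and infinite dimensional.

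The first step is to identify the pair $(n,m)$ attached to $X_{\tau,p,q}$. Here $n=2$, and $\Gamma_{\tau,p,q}$ is discrete of real rank $3$ (using $\mathrm{Im}\,\tau>0$ to ensure $\R$-linear independence of the three generators and to keep the quotient Hausdorff), so that $m=3-2=1$. Substituting $m=1$ into Kazama's formula in the theta case yields $\dim H^k(X,\mathcal{O}_X)=\binom{1}{k}$, which equals $1$ for $k=1$ and $0$ for $k\geq 2$. This immediately gives the implications $(i)\Rightarrow(ii)$ and $(i)\Rightarrow(iii)$, and also verifies the sharpened statement $\dim H^k(X,\mathcal{O}_X)=\binom{1}{k}$ in the theta case.

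For the converses I would argue by contraposition. A toroidal group that is not theta is wild by definition, and the second half of Kazama's theorem then forces $H^k(X,\mathcal{O}_X)$ to be non-Hausdorff and infinite dimensional for every $k\geq 1$. In particular $\dim H^1(X,\mathcal{O}_X)\neq 1$ and $\dim H^1(X,\mathcal{O}_X)=\infty$, contradicting $(ii)$ and $(iii)$ respectively. This yields $(ii)\Rightarrow(i)$ and $(iii)\Rightarrow(i)$, and the ``moreover'' assertion is just a restatement of the wild-case half of Kazama's theorem.

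There is no real obstacle beyond bookkeeping: the only substantive points to verify are (a) that $\Gamma_{\tau,p,q}$ is indeed discrete of real rank $3$ with $\C$-span equal to $\C^2$, and (b) that the normalization of the ``type'' in Kazama's theorem is such that the above count gives $m=1$. Both are straightforward; once they are in place the proposition follows by direct substitution.
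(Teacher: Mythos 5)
Your reduction to Kazama's theorem is the right starting point, and your numerology is correct: for a toroidal theta group $\C^n/\Gamma$ with $\mathrm{rank}\,\Gamma=n+m$ one has $\dim H^q(X,\mathcal{O}_X)=\binom{m}{q}$, and here $n=2$, $m=1$, which matches assertions $(ii)$ and $(iii)$; the wild case gives the ``moreover'' clause and the converses by contraposition. However, you have a genuine gap where you write that the only remaining issue is ``normalization bookkeeping.'' Kazama's Theorem~4.3 is not stated in terms of the theta/wild dichotomy as this paper defines it (namely $\dist((np,nq),\Z^2)\geq A\delta^n$ for all $n\in\Z_{>0}$); it is stated in terms of a different Diophantine condition, roughly that for some $a>0$
\begin{align*}
\sup_{m\in\Z^3\setminus\{0\}} \frac{e^{-a \cdot\max\{|m_1|,|m_2|\}}}{|s m_1+t m_2 - m_3|}<\infty ,
\end{align*}
where $(s,t)$ are the data of $\Gamma$ after a linear change of coordinates bringing it to the normal form $\langle (1,0),(0,1),(s,t)\rangle$ with $s=\tau$, $t=q-p\tau$. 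The entire substance of the proof is establishing that this condition is equivalent to the paper's definition of ``toroidal theta group.'' That equivalence is elementary but not automatic: one direction requires comparing $|\tau(pm_2-m_1)+m_3-qm_2|$ with $\dist((m_2p,m_2q),(m_1,m_3))$ via equivalence of norms on $\R^2$ and then reducing $\max\{|m_1|,|m_2|\}$ to a quantity controlled by $|m_2|$ alone (one may restrict to $|m_2p-m_1|<1$, since the nearest lattice point is what matters); the other direction uses the reverse norm comparison together with $\delta^{\max\{|m_1|,n\}}\leq\delta^n$. Without this two-sided estimate, citing Kazama's theorem does not yield the proposition as stated, because ``theta'' in your sense and ``theta'' in the sense of the hypothesis of the cited theorem have not been shown to coincide for this family.

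A secondary omission: you should also record the coordinate change that puts $\Gamma_{\tau,p,q}=\langle(0,1),(1,p),(\tau,q)\rangle$ into the standard form required to read off Kazama's invariants; this is where $s=\tau$ and $t=q-p\tau$ come from, and it is needed before the condition above can even be written down.
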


\begin{proof}
After a linear change of the coordinates, we obtain
\begin{align*}
X=\C^2/\left\langle \begin{pmatrix} 0\\ 1\\ \end{pmatrix} , \begin{pmatrix} 1\\ p\\ \end{pmatrix} , \begin{pmatrix} \tau\\ q\\ \end{pmatrix} \right\rangle
\cong \C^2/\left\langle \begin{pmatrix} 1\\ 0\\ \end{pmatrix} , \begin{pmatrix} 0\\ 1\\ \end{pmatrix} , \begin{pmatrix} s\\ t\\ \end{pmatrix} \right\rangle
\end{align*}
where $s:=\tau$ and $t:=q-p\tau$. 
By \cite[Theorem 4.3]{Kazama}, it suffices to show the equivalence of $(i)$ and the following condition: 
\begin{enumerate}[$(iv)$]
\item For $m=(m_1,m_2,m_3)\in\Z^3$, there exists $a>0$ such that
\begin{align*}
\sup_{m\in\Z^3\setminus\{0\}} \frac{e^{-a \cdot\max\{|m_1|,|m_2|\}}}{|s m_1+t m_2 - m_3|}<\infty.
\end{align*}
\end{enumerate}
(We apply \cite[Theorem 4.3]{Kazama} with $v_{11}=s$, $v_{12}=t$, and $K_m=K_{m,1}=|s m_1+t m_2-m_3|$ for $m\in\Z^3$, where $v_{11}$, $v_{12}$, and $K_m$ are as in \cite{Kazama}. See also \cite{Kodel}.) 
Note that the inequality in the assertion $(iv)$ is equivalent to 
\begin{align}
\sup_{m\in\Z^3\setminus\{0\}} \frac{e^{-a \cdot\max\{|m_1|,|m_2|\}}}{|\tau(pm_2- m_1)+ m_3 -q m_2|} <\infty. \label{dimcondi}
\end{align}

Assume that there exists $a>0$ such that $(\ref{dimcondi})$ holds. 
By the equivalence of norms in a finite dimensional vector space, we can take $k_{\tau,p,q}>0$ such that
\begin{align*}
|\tau(pm_2- m_1)+ m_3 -q m_2|< k_{\tau,p,q} \cdot\dist((m_2 p, m_2 q),(m_1,m_3))
\end{align*}
for all $m=(m_1,m_2,m_3) \in \Z^3\setminus\{0\}$. 
Take $M>0$ such that
\begin{align*}
\frac{e^{-a \cdot\max\{|m_1|,|m_2|\}}}{|\tau(pm_2- m_1)+ m_3 -q m_2|}<M
\end{align*} 
for all $m\in\Z^3\setminus\{0\}$. 
Then we have
\begin{align*}
\frac{e^{-a \cdot\max\{|m_1|,|m_2|\}}}{k_{\tau,p,q}M}< \dist((m_2 p, m_2 q),(m_1,m_3))
\end{align*}
for all $m\in\Z^3\setminus\{0\}$. 
Now, we can assume that  $|m_2p-m_1|<1$ holds. 
Hence 
\begin{align*}
\frac{e^{-a (|pm_2|+1+|m_2|)}}{k_{\tau,p,q}M} \leq \frac{e^{-a (|m_1|+|m_2|)}}{k_{\tau,p,q}M}\leq \dist((m_2 p, m_2 q),\Z^2)
\end{align*}
holds for all $m_3\in\N$. Set $\delta:=e^{-a(|p|+1)}$ and $ A:=\frac{e^{-a}}{k_{\tau,p,q}M}$. 
We find that the condition $(i)$ holds.

On the other hand, assume that the condition $(i)$ holds. 
Note that $A\delta^n \leq\dist ((n p, n q),(m_1,m_2))$ holds for all $(m_1, m_2)\in\Z^2$ and there exists $\tilde{k}_{\tau,p,q}>0$ such that
\begin{align}
|\tau(np- m_1)+ m_2 -nq|> \tilde{k}_{\tau,p,q} \cdot\dist((n p, n q),(m_1,m_2)). \label{ktilcondi}
\end{align}
We find that
\begin{align*}
\frac{\delta^n}{|\tau(np- m_1)+ m_2-nq|}<\frac{1}{\tilde{k}_{\tau,p,q}A}
\end{align*}
for all $(m_1,m_2)\in\Z^2$ and any $n\in\Z_{>0}$. 
Then 
\begin{align*}
\frac{\delta^{\max\{|m_1|,|n|\}}}{|\tau(np- m_1)+ m_2-nq|}<\frac{1}{\tilde{k}_{\tau,p,q}A}
\end{align*}
holds. 
By setting $a:=-\log\delta$, we obtain the inequality $(\ref{dimcondi})$. 
Therefore, we find that the condition $(iv)$ holds.
\end{proof}

By Abe's results on $H^k(X, E)$ for a holomorphic line bundle $E$ over a general toroidal group $X$ (see \cite{Abetoro}), we can specify the dimension of $H^k(X, E)$ as follows. 

\begin{proposition}{\rm (\cite[Theorem $9.1$]{Abetoro} for Example \ref{eg:main} with $p=\theta_1=0$)}\label{Abe:finitethm}
Let $X=X_{\tau, 0, q}$ and $E=E_{0, \theta_2}$.
Assume that $X$ is a toroidal group and $E$ is not holomorphically trivial. 
Then $H^1(X,E)=0$ holds if the following condition holds: there exist $A>0$ and $0<\delta<1$ such that
\begin{align}\label{Abe:condi}
\dist(nq-\theta_2,\Z)\geq A\delta^n
\end{align}
for all $n\in\Z_{>0}$.
If this condition does not hold, $H^1(X,E)$ is non-Hausdorff and infinite dimensional. \qed
\end{proposition}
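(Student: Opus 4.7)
The plan is to derive this proposition as a direct specialization of \cite[Theorem 9.1]{Abetoro}, which characterizes the vanishing and the Hausdorffness / finite-dimensionality of $H^k(X, E)$ for a flat line bundle $E$ over a general toroidal group $X$ in terms of a Diophantine condition involving the periods of $\Gamma$ and the defining character of $E$. First I would identify the relevant data for $(X, E) = (X_{\tau, 0, q}, E_{0, \theta_2})$. With $p = 0$ the lattice $\Gamma_{\tau, 0, q}$ is already in a convenient form: the two generators $\binom{1}{0}, \binom{0}{1}$ span the maximal totally real sublattice and $\binom{\tau}{q}$ is the unique complex period; with $\theta_1 = 0$ the character $\rho$ defining $E$ is trivial on the real generators and equals $e^{2\pi\i\theta_2}$ on $\binom{\tau}{q}$. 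The hypothesis that $X$ is a toroidal group forces $q \notin \Q$ by Lemma \ref{toroidaliff}, and one checks that the hypothesis that $E$ is not holomorphically trivial rules out the degenerate case $\theta_2 - nq \in \Z$ for some $n \in \Z$.

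Second, I would translate Abe's Diophantine condition into \eqref{Abe:condi}. Because $\Gamma$ has only one complex period, Abe's sup-condition over a multi-index in $\Z^N$ collapses to a sup over a single integer $n$, shifted by $\theta_2$. After collecting terms and applying an equivalence-of-norms argument in the spirit of the end of the proof of Proposition \ref{Kazama:dimthm}, Abe's condition for $H^1(X,E) = 0$ becomes: there exist $a > 0$ and $M > 0$ such that
\[
\frac{e^{-a|n|}}{\dist(nq - \theta_2, \Z)} < M \quad \text{for all } n \in \Z\setminus\{0\},
\]
which is in turn equivalent to \eqref{Abe:condi} upon setting $\delta = e^{-a}$ and $A = 1/M$. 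When \eqref{Abe:condi} fails, Abe's theorem directly yields that $H^1(X, E)$ is non-Hausdorff and infinite dimensional, completing both halves of the statement.

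The main obstacle is the faithful matching of Abe's general Diophantine data, phrased in the setting of an arbitrary toroidal group with a line bundle, against our concrete situation with a single complex period and a character nontrivial only on that period; in particular one must track the shift by $\theta_2$ uniformly in $n$ so that the one-dimensional $\dist$ appearing in \eqref{Abe:condi} really dominates the relevant affine-linear combination. Once this dictionary is set up carefully, the norm-equivalence step is essentially identical to that of Proposition \ref{Kazama:dimthm} and the argument reduces to a one-shot application of Abe's theorem.
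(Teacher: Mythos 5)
Your proposal is correct and follows essentially the same route as the paper: quote Abe's Theorem 9.1 and reduce the statement to showing that Abe's three-index Diophantine condition $(H)'_S$ on $|\tau m_1+qm_2-m_3-\theta_2|$ is equivalent to the single-index condition \eqref{Abe:condi}, with the nontrivial direction handled by the same equivalence-of-norms estimate used in Proposition \ref{Kazama:dimthm}. The only caveat is that the condition does not literally ``collapse'' to one integer --- the equivalence of the $\Z^3$-condition with the $n\in\Z_{>0}$ condition is exactly the content one must prove --- but you correctly identify the norm-equivalence step that supplies it.
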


Note that, according to Proposition \ref{Abe:finitethm}, $H^1(X,E)$ vanishes if $\tau=\i$, $\theta_2=q$, and $q$ is a {\it Diophantine number}, that is, there exist $A>0$ and $N\in\Z_{>0}$ such that 
\begin{align*}
\left| q-\frac{m}{n} \right|\geq \frac{A}{n^N}
\end{align*}
for any integers $m$ and $n$ with $n>0$.
If $q$ is a {\it Liouville number}, it does not hold (See \cite[Example $10.2$ and $10.3$]{Abetoro}).

\begin{proof}[\rm Proof of Theorem \ref{Abe:finitethm}]
By \cite[Theorem $9.1$]{Abetoro}, it suffices to prove that the condition in Theorem \ref{Abe:finitethm} is equivalent to the following condition $(H)'_S$: there exist $C>0$ and $a>0$ such that
\begin{align*}
| \tau m_1+q m_2-m_3-\theta_2 |\geq C e^{-a\max\{|m_1|,|m_2|\}}
\end{align*}
for all $m=(m_1,m_2,m_3)\in\Z^3$ with $(m_1,m_2)\neq(0,0)$. 

If $(H)'_S$ holds, 
\begin{align*}
| q m_2-m_3-\theta_2 |\geq C e^{-a|m_2|}
\end{align*}
holds for any $m_2\in\Z\setminus\{0\}$. We set $A:=C$ and $\delta:=e^{-a}$, and then we obtain the inequality $(\ref{Abe:condi})$. 
On the other hand, assume that there exist $A>0$ and $0<\delta<1$ such that the inequality $(\ref{Abe:condi})$ holds for all $n\in\Z_{>0}$. 
By the equivalence of norms in a finite dimensional vector space, there exists $k_{\tau,q,\theta_2}>0$ such that
\begin{align*}
|\tau m_1+nq -m_2 -\theta_2|\geq k_{\tau,q,\theta_2} |nq -\theta_2 -m_2|
\end{align*}
for all $n\in\Z_{>0}$ and $(m_1,m_2)\in\Z^2$.  Hence,
\begin{align*}
|\tau m_1+nq -m_2 -\theta_2|&\geq k_{\tau,q,\theta_2}\dist(nq-\theta_2,\Z) \\
&\geq Ak_{\tau,q,\theta_2}\delta^n \geq Ak_{\tau,q,\theta_2} \delta^{\max\{n,|m_1|\}}
\end{align*}
holds. Set $C:=Ak_{\tau,q,\theta_2}$ and $a:=-\log\delta$. Thus, we find that $(H)'_S$ holds.
\end{proof}

\begin{remark}\label{rmk:general}
For Example \ref{eg:main}, $H^1(X,E)$ is 0, or non-Hausdorff and infinite dimensional under the assumption in Theorem \ref{thm:main} when $X$ is a toroidal gruoup. In fact, we know the following by \cite[$(7.3)$ and Theorem $9.1$]{Abetoro}; there does not exist $\sigma=(\sigma_1,\sigma_2,\sigma_3)\in\Z^3\setminus\{0\}$ satisfying
\begin{align*}
 \left\{ \begin{array}{ll}
 \Re\tau (\sigma_1-p\sigma_2+\theta_1)+\sigma_2q+\sigma_3-\theta_2 =0 \\
 \Im\tau (\sigma_1-p\sigma_2+\theta_1)=0
\end{array} \right.
\end{align*}
since $\theta_1+np\notin\Z$ or $\theta_2+nq\notin\Z$ for any integer $n$. 
Hence, we find that $Z$ in \cite{Abetoro} is $\Z^3$. Thus, $H^1(X,E)$ is 0, or non-Hausdorff and infinite dimensional. \qed
\end{remark}

\section{Proof of Theorem \ref{thm:main}}\label{general:proof}

\subsection{The density of $B^{0,1}(E)$ in $Z^{0,1}(E)$}\label{notation:general}
Here we show the vanishing of $(H^1(X,E))^\vee$ in a slightly generalized configuration. 
Let $(X,E)$ be a pair of a complex manifold $X$ and a holomorphic line bundle $E$ over $X$ which satisfies the following assumption. 
There exist a compact complex manifold $Y$ and a holomorphic line bundle $\pi \colon F_Y\to Y$ such that the following conditions hold: 
$(i)$ The complement of the zero section in the total space of $F_Y$ is biholomorphic to $X$. 
$(ii)$ There exists a holomorphic line bundle $E_Y$ over $Y$ such that $\pi^\ast E_Y =E$. 
$(iii)$ For any $n\in\Z$, $H^1(Y, F_Y^n\otimes E_Y)=0$. 

\begin{lemma}
Let $(X,E)$ be as in Example \ref{eg:main}. The pair $(X,E)$ satisfies the conditions $(i)$, $(ii)$, and $(iii)$ under the assumption in Theorem \ref{thm:main}.
\end{lemma}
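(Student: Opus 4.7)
The plan is to reuse the geometric setup already developed in the proof of Lemma \ref{H0XE} and verify the three conditions in turn. I will take $Y := C = \C/\langle 1,\tau\rangle$ and $F_Y := F_C$, the flat line bundle over $C$ with monodromy $\rho_{F_C}(1) = e^{2\pi \i p}$ and $\rho_{F_C}(\tau) = e^{2\pi \i q}$. As observed in the proof of Lemma \ref{H0XE}, the map $\pi \colon X \to C$ induced by $(z,w) \mapsto z$ identifies $X$ with the complement of the zero section in the total space of $F_C$; this gives condition $(i)$. Setting $E_Y := E_C$ (the flat line bundle with monodromy $e^{2\pi \i \theta_1}$ and $e^{2\pi \i \theta_2}$), we also have $E = \pi^\ast E_C$ directly from Example \ref{eg:main}, giving condition $(ii)$.

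The only genuine content is condition $(iii)$. The tensor product $F_C^n \otimes E_C$ is a unitary flat line bundle on $C$ with monodromy character $\rho_n(1) = e^{2\pi \i (np+\theta_1)}$, $\rho_n(\tau) = e^{2\pi \i (nq+\theta_2)}$. The assumption of Theorem \ref{thm:main}, that for every $n \in \Z$ either $\theta_1+np \notin \Z$ or $\theta_2+nq \notin \Z$, is equivalent to saying that $\rho_n$ is non-trivial for every $n \in \Z$. Hence $F_C^n \otimes E_C$ is a non-trivial unitary flat line bundle on the elliptic curve $C$, so $H^0(C, F_C^n \otimes E_C) = 0$ by the same fact (\cite[Lemma 2.3]{HaKo}) used in the proof of Lemma \ref{H0XE}.

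To pass from $H^0$ to $H^1$, I would invoke Serre duality on $C$: since $K_C \cong \mathcal{O}_C$, one has
\[
H^1(C, F_C^n \otimes E_C) \cong H^0(C, F_C^{-n} \otimes E_C^{-1})^\vee.
\]
The dual bundle corresponds to the inverse character $\rho_n^{-1}$, which is trivial iff $\rho_n$ is. Consequently the assumption of Theorem \ref{thm:main} also ensures $H^0(C, F_C^{-n} \otimes E_C^{-1}) = 0$, and condition $(iii)$ follows. Since each step is either a direct copy of the setup in Lemma \ref{H0XE} or a one-line application of Serre duality on an elliptic curve, I do not expect any real obstacle; the only subtlety is noticing that the hypothesis of Theorem \ref{thm:main} is symmetric under $n \mapsto -n$, which is what allows the same argument to handle both $F_C^n \otimes E_C$ and its dual.
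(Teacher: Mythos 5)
Your proposal is correct and follows essentially the same route as the paper: the same choice $Y=C$, $F_Y=F_C$, $E_Y=E_C$, with $(i)$ and $(ii)$ read off from the setup of Lemma \ref{H0XE} and $(iii)$ reduced to the non-triviality of the unitary flat bundle $F_C^n\otimes E_C$; the paper simply cites \cite[Lemma 2.3]{HaKo} for the $H^1$ vanishing, while you derive it from the $H^0$ vanishing via Serre duality on the elliptic curve. (One cosmetic point: what your Serre-duality step actually uses is that a flat line bundle is trivial iff its dual, with character $\rho_n^{-1}$, is trivial --- not an $n\mapsto -n$ symmetry of the hypothesis --- but you already stated the correct fact, so nothing is affected.)
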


\begin{proof}
Let $Y$ be the elliptic curve $\C/\left\langle 1, \tau \right\rangle$. 
We define $F_Y:=F_C$ and $E_Y:=E_C$ as in the proof of Lemma $\ref{H0XE}$. 
As already observed in the proof of Lemma $\ref{H0XE}$, we easily find that the conditions $(i)$ and $(ii)$ hold. 
We find that $F_Y^n\otimes E_Y$ is not trivial by the assumption in Theorem \ref{thm:main}. 
Hence, the condition $(iii)$ holds (see \cite[Lemma 2.3]{HaKo} for example).
\end{proof}

Let $\mathscr{A}^{p,q}_{C^\infty}(E)$ be the set of $E$-valued $(p,q)$-forms of class $C^\infty$ on $X$. Set 
\begin{align*}
Z^{p,q}(E)&:=\Ker \left( \delbar \colon \mathscr{A}^{p,q}_{C^\infty}(E) \to \mathscr{A}^{p,q+1}_{C^\infty}(E) \right) \ \text{and}\\
B^{p,q}(E)&:= \Image \left( \delbar \colon \mathscr{A}^{p,q-1}_{C^\infty}(E) \to \mathscr{A}^{p,q}_{C^\infty}(E) \right).
\end{align*}

\begin{proposition}\label{prop:dense}
Let $(X,E)$ be a pair which satisfies the conditions $(i)$, $(ii)$, and $(iii)$ above. 
In the topology of the uniform convergence of all the derivatives of coefficients on compact sets, $B^{0,1}(E)$ is dense in $Z^{0,1}(E)$.
\end{proposition}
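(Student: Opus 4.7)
The plan is to use the \v{C}ech--Dolbeault correspondence along the $\C^\ast$-fibration $\pi\colon X\to Y$ from condition $(i)$: represent $[f]$ by a \v{C}ech cocycle, expand fiberwise in a Laurent series along the $\C^\ast$-fibers, truncate, and use assumption $(iii)$ to turn each truncated cocycle into a Dolbeault coboundary.

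First I would choose a Stein open cover $\{U_j\}$ of $Y$ trivializing both $F_Y$ and $E_Y$, with the notation $z_j, w_j, e_j, s_{jk}, t_{jk}$ as in the proof of Lemma \ref{H0XE}; then $V_j:=\pi^{-1}(U_j)\cong U_j\times\C^\ast$ is a Stein cover of $X$. Fix a smooth partition of unity $\{\rho_j\}$ subordinate to $\{V_j\}$. Given $f\in Z^{0,1}(E)$, solve $\delbar g_j=f|_{V_j}$ on each $V_j$ and form the holomorphic \v{C}ech $1$-cocycle $\xi_{jk}:=g_j-g_k\in H^0(V_{jk},E)$. The global smooth section $g:=\sum_j\rho_j g_j$ of $E$ then satisfies $f=\delbar g-\omega$, where $\omega|_{V_j}:=-\sum_k\delbar\rho_k\wedge\xi_{jk}$ is the $(0,1)$-form representative of $\{\xi_{jk}\}$ under the \v{C}ech--Dolbeault correspondence; in particular $[\omega]=[f]$ in Dolbeault cohomology.

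Next, expand each $\xi_{jk}$ fiberwise as a Laurent series $\sum_n a_n^{jk}(z_j)\,w_j^n\cdot\pi^\ast e_j$, which converges in the $C^\infty$ topology on compact subsets of $V_{jk}$. A bookkeeping computation using $w_j=s_{jk}^{-1}w_k$ and $e_j=t_{jk}^{-1}e_k$ shows that, for each $n\in\Z$, the family $\{a_n^{jk}\}$ is a \v{C}ech $1$-cocycle on $\{U_j\}$ with values in a holomorphic line bundle $L_n$ of the form $F_Y^{m}\otimes E_Y$ for some $m\in\Z$ (with transition functions $s_{jk}^n t_{jk}$). Since $\{U_j\}$ is Stein and assumption $(iii)$ gives $H^1(Y,L_n)=0$, each $\{a_n^{jk}\}$ is a \v{C}ech coboundary: $a_n^{jk}=b_n^j-s_{jk}^n t_{jk}\,b_n^k$ for some $b_n^j\in H^0(U_j,L_n)$. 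Define the finite Laurent sums $\eta_j^{(N)}:=\sum_{|n|\le N}b_n^j(z_j)w_j^n\pi^\ast e_j\in H^0(V_j,E)$; then $\xi_{jk}^{(N)}:=\sum_{|n|\le N}a_n^{jk}w_j^n\pi^\ast e_j=\eta_j^{(N)}-\eta_k^{(N)}$ on $V_{jk}$.

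Finally, set $\psi_N:=\sum_k\rho_k\eta_k^{(N)}$, a global smooth section of $E$. A direct computation shows that the $(0,1)$-form representative $\omega_N$ of $\{\xi_{jk}^{(N)}\}$ equals $\delbar\psi_N$, so $\delbar(g-\psi_N)=\delbar g-\omega_N\in B^{0,1}(E)$. The discrepancy $f-\delbar(g-\psi_N)=\omega_N-\omega$ equals $\sum_k\delbar\rho_k\wedge(\xi_{jk}^{(N)}-\xi_{jk})$ on each $V_j$, so it is controlled by the Laurent tails $\xi_{jk}-\xi_{jk}^{(N)}$, which tend to zero in $C^\infty$ on compact subsets of $V_{jk}$. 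Hence $\delbar(g-\psi_N)\to f$ in $C^\infty$ on compact subsets of $X$, proving the density. The main obstacle is the bookkeeping in the decoupling step: one must verify that the $w_j$-Laurent grading is compatible with the \v{C}ech cocycle condition so that assumption $(iii)$ applies term by term. After that, the $C^\infty$ convergence of the truncated approximants is automatic from standard Cauchy-type estimates for Laurent coefficients of holomorphic functions on annular regions.
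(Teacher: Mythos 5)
Your proposal is correct and follows essentially the same route as the paper: pass to a \v{C}ech $1$-cocycle on the Stein cover $\{V_j=\pi^{-1}(U_j)\}$, expand fiberwise in a Laurent series, use hypothesis $(iii)$ to split each Laurent coefficient cocycle, and push the truncated splittings back to Dolbeault coboundaries via a partition of unity. The only (cosmetic) difference is that you realize the \v{C}ech--Dolbeault correspondence explicitly by solving $\delbar g_j=f|_{V_j}$ on each Stein piece, whereas the paper invokes the correspondence abstractly and absorbs the discrepancy into an auxiliary $\delbar\sigma$.
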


\begin{proof}
Let $\{ U_\lam \}_\lam$ be a sufficiently fine finite Stein open covering of $Y$. 
Take a local coordinate $\xi_\lam$ on $U_\lam$. 
The family $\{ V_\lam \}$ defined by $V_\lam:=\pi^{-1}(U_\lam)\, \cap\,  X$ is an open covering of $X$. 
We denote $\pi^\ast\xi_\lam$ by $\xi_\lam$ for simplicity. 
Denote by $\eta_\lam\in\C^\ast$ the fiber coordinate of $V_\lam$. 
We regard $(\xi_\lam,\eta_\lam)$ as coordinates of $V_\lam$. 
For $(\xi_j,\eta_j)=(\xi_k,\eta_k)$ on $V_{jk}:=V_j \cap V_k$, there exist holomorphic functions $t_{jk} \colon U_{jk}:=U_j \cap U_k \to \C^\ast$ such that $\xi_j=\xi_k$ on $U_{jk}$ and $\eta_j= t_{jk}^{-1}\eta_k$. 
Take a local trivialization $e_j$ of $E_Y|_{U_j}$ and let $s_{jk}\colon U_{jk} \to \C^\ast$ be the holomorphic function such that $e_j = s_{jk}^{-1}e_k$.

Take $w\in Z^{0,1}(E)$. Let $[ \{ (V_{jk}, f_{jk} ) \} ]\in \check{H}^1(\{V_j \}, \mathcal{O}_X(E))$ be the element which coincides with $[ w ] \in H^{0,1}(X,E)$ via the \v{C}ech--Dolbeault correspondence. 
Denote $\pi^\ast e_j$ by $e_j$. 
We can expand each $f_{jk}$ as
\begin{align*}
f_{jk}=\left( \sum_{n=-\infty}^\infty a_{jk,n}(\xi_j)\eta_j^n \right) e_j.
\end{align*}
By the $1$-cocycle conditions for $f_{jk}$'s, $f_{jk}+f_{k\l}+f_{\l j}\equiv 0$ on $V_{jk\l}:=V_j\cap V_k \cap V_\l$. 
We have
\begin{align*}
0 &= \sum_{n=-\infty}^\infty a_{jk,n}\eta_j^n e_j+ \sum_{n=-\infty}^\infty a_{k\l,n}\eta_k^n e_k+ \sum_{n=-\infty}^\infty a_{\l j,n}\eta_\l^n e_\l\\
&=\sum_{n=-\infty}^\infty a_{jk,n}\eta_j^n e_j+ \sum_{n=-\infty}^\infty a_{k\l,n}(t^{-1}_{kj}\eta_j)^n (s_{kj}^{-1} e_j)+ \sum_{n=-\infty}^\infty a_{\l j,n}(t^{-1}_{\l j}\eta_j)^n (s_{\l j}^{-1} e_j)\\
&=\sum_{n=-\infty}^\infty \left( a_{jk,n}+ a_{k\l,n} t^{-n}_{kj} s_{kj}^{-1} + a_{\l j,n} t^{-n}_{\l j}s_{\l j}^{-1} \right) \eta_j^n e_j.
\end{align*}
Thus, we obtain the relation
\begin{align}\label{eq:cocycle1}
a_{jk,n}+ a_{k\l,n} t^{-n}_{kj} s_{kj}^{-1} + a_{\l j,n} t^{-n}_{\l j}s_{\l j}^{-1} \equiv 0
\end{align}
on $U_{jk\l}$ for $n\in\Z$. 
Let $\eps_j$ be local trivializations of $F_Y|_{U_j}$ such that $\eps_j=t_{kj}\eps_k$. 
By multiplying the equation $(\ref{eq:cocycle1})$ by $\eps_j^n\otimes e_j$, 
\begin{align*}
a_{jk,n} \eps_j^n \otimes e_j + a_{k\l,n} \eps_k^n\otimes e_k + a_{\l j,n} \eps_\l^n \otimes e_\l \equiv 0,
\end{align*}
from which we find that $\{(U_{jk}, a_{jk,n}\eps_j^n \otimes e_j)\}\in Z^{0,1}(F_Y^n\otimes E_Y)$. 
There exists $\{(U_j, b_{j,n}\eps_j^n \otimes e_j)\}\in \check{C}^0( \{ U_j \}, \mathcal{O}_Y(F_Y^n\otimes E_Y))$ such that $\delta (\{(U_j, b_{j,n}\eps_j^n \otimes e_j)\} )= \{(U_{jk},a_{jk,n}\eps_j^n \otimes e_j)\}$ since $H^1(Y, F_Y^n\otimes E_Y)=0$ for all $n\in\Z$. 
Set the formal power series $g_j$ and the partial sum of $g_j^{(N)}$ for $N\in\Z_{>0}$ as follows:
\begin{align*}
g_j&:=\left(\sum_{n=-\infty}^\infty b_{j,n}\eta_j^n\right) e_j \ \text{and}\\
g_j^{(N)}&:=\left(\sum_{n=-N}^N b_{j,n}\eta_j^n\right) e_j.
\end{align*}
Similarly, define the partial sum of $f_{jk}$ by $\textstyle f_{jk}^{(N)}:=\left( \sum_{n=-N}^N a_{jk,n}(\xi_j)\eta_j^n \right) e_j$. 
Note that $g_k-g_j=f_{jk}$ holds formally, and that $f_{jk}^{(N)}$ converges to $f_{jk}$ on $V_{jk}$ as $N\to \infty$. 
By constructions of $g_j^{(N)}$ and $f_{jk}^{(N)}$, we have $\delta (\{(V_j, g_j^{(N)}) \})=\{ (V_{jk}, f_{jk}^{(N)} )\}$. 
Let $\{\rho_j\}$ be a partition of unity which is subordinate to the finite open cover $\{U_j\}$. 
Denote $\pi^\ast\rho_j$ by $\rho_j$. 
For $g_j^{(N)}$, we set $G^{(N)}:=\sum_j g_j^{(N)}\rho_j \in \Gamma(X, C^\infty(E))$. 
Then the equation 
\begin{align*}
G^{(N)}|_{V_j} &= g_j^{(N)}\rho_j+ \sum_{\ell\neq j} g_\ell^{(N)} \rho_\ell\\
&= g_j^{(N)} (1-\sum_{\ell \neq j} \rho_\ell) + \sum_{\ell\neq j} g_\ell^{(N)}\rho_\ell\\
&=g_j^{(N)} +\sum_{\ell \neq j}(g_\ell^{(N)}-g_j^{(N)})\rho_\ell= g_j^{(N)}+\sum_{\ell\neq j}f_{j\ell}^{(N)}\rho_\ell
\end{align*}
holds on $V_j$. 
Thus, we have 
\begin{align}\label{CDcorres1}
\delbar G^{(N)}=\sum_{\ell\neq j} f_{j\ell}^{(N)} \delbar \rho_\ell
\end{align}
on $V_j$. 
The right hand side of the equation $(\ref{CDcorres1})$ is a $(0,1)$-form whose class corresponds to $[\{(V_{j\ell}, f_{j\ell}^{(N)})\}]$ via the \v{C}ech--Dolbeault correspondence. 
The form $\delbar G^{(N)}$ converges compactly uniformly in all the derivatives of coefficients to $\textstyle \Omega:= \sum_{\ell\neq j}f_{j\ell} \delbar \rho_\ell=\sum_\ell f_{j\ell} \delbar \rho_\ell$ on $V_j$, which corresponds to $[\{(V_{j\ell}, f_{j\ell})\}]$. 
We have $[w]=[\Omega]\in H^1(X,E)$. 
Hence, there exists $\sigma\in\Gamma(X, C^\infty(E))$ such that $w=\Omega+\delbar \sigma$. 
We define $w_N:=\delbar (G^{(N)}+\sigma) \in B^{0,1}(E)$ for $N\in\Z_{\geq0}$. 
Then $w_N$ converges uniformly in all the derivatives of coefficients to $w$ on compact sets. 
Therefore, $\bar{B^{0,1}(E)}=Z^{0,1}(E)$. 
\end{proof}

\begin{remark}\label{cohom:equiv}
In the proof of Proposition $\ref{prop:dense}$, \v{C}ech--Dolbeault correspondence means that two vector spaces are equal as topological vector spaces (see \cite[Theorem $2.1$]{Lau}). 
According to \cite[Theorem $2.1$]{Lau}, the cohomology obtained by the space of currents with the topology of the uniform convergence on bounded sets (the strong dual topology) is also topologically isomorphic to the cohomology for a Leray covering or $C^\infty$ forms as in Proposition $\ref{prop:dense}$. 
\end{remark}


\subsection{Serre duality theorem}\label{serre}
In this subsection, we confirm that the vanishing of the topological space $(H^1(X,E))^\vee$ implies that $H^1_0(X,E)=0$ for $(X,E)$ as in Example \ref{eg:main} by using Serre duality theorem \cite{LT-sepa} and \cite{LT-serre} (see also \cite[Theorem $1.33$]{OhL}). 

\begin{claim}\label{serre:suffi}
For a pair $(X,E)$ as in Example \ref{eg:main}, the dual space $(H^1(X,E^{-1}))^\vee$ vanishes if and only if $H^1_0(X,E)=0$ holds.
\end{claim}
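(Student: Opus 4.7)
The plan is to deduce the claim directly from the Serre duality theorem for noncompact complex manifolds, applied to $(X,E)$. There are essentially two ingredients I will need: the triviality of the canonical bundle $K_X$, and the form of Serre duality due to Laufer already cited in the excerpt.

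First I observe that $K_X$ is holomorphically trivial for $X = X_{\tau,p,q}$. Since $\Gamma_{\tau,p,q}$ acts on $\mathbb{C}^2$ by translations, the holomorphic $2$-form $dz \wedge dw$ is $\Gamma$-invariant and descends to a nowhere-vanishing holomorphic $2$-form on $X$. In particular $K_X \otimes E^{-1} \cong E^{-1}$ canonically.

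Next I invoke the Serre duality theorem in the form of Laufer \cite{LT-sepa, LT-serre} (see also \cite[Theorem 1.33]{OhL}). For a paracompact complex manifold $X$ of dimension $n$ and a holomorphic line bundle $E$, the cup-product/integration pairing
\[
H^q_0(X,E) \times H^{n-q}(X, K_X \otimes E^{-1}) \longrightarrow \mathbb{C}, \qquad ([\alpha],[\beta]) \longmapsto \int_X \alpha \wedge \beta,
\]
induces an isomorphism $H^q_0(X,E) \cong (H^{n-q}(X, K_X \otimes E^{-1}))^\vee$ onto the topological dual (with respect to the natural $C^\infty$-topologies, which, as recorded in Remark \ref{cohom:equiv}, are the ones being used throughout). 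Specialising to $n=2$, $q=1$ and substituting $K_X \cong \mathcal{O}_X$ yields
\[
H^1_0(X,E) \;\cong\; (H^1(X, E^{-1}))^\vee,
\]
and the equivalence asserted in the claim follows at once: the left-hand side is zero if and only if the right-hand side is.

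The only potential subtlety, which I expect to be the main point requiring care, is that $H^1(X, E^{-1})$ is frequently non-Hausdorff for our pairs $(X,E)$ (cf.\ Proposition \ref{Abe:finitethm} and Remark \ref{rmk:general}). This does not obstruct the argument: the continuous dual of a locally convex space automatically annihilates the closure of $\{0\}$ and therefore coincides with the dual of its maximal Hausdorff quotient, which is precisely the object Laufer's theorem handles. Verifying the hypotheses of that theorem for our $X$ is otherwise routine, since $X$ is a second countable complex manifold with trivial canonical bundle, so the real content of the claim reduces to the statement of Serre duality together with the observation $K_X \cong \mathcal{O}_X$ above.
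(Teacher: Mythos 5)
Your overall route --- Serre duality combined with the triviality of $K_X$ --- is the same as the paper's, and the observation that $dz\wedge dw$ descends to trivialize $K_X$ is correct. But there is a genuine gap: you state Serre duality in the form $H^q_0(X,E)\cong (H^{n-q}(X,K_X\otimes E^{-1}))^\vee$ as if it held unconditionally for any paracompact complex manifold. It does not. On a non-compact manifold the integration pairing induces an isomorphism onto the topological dual only under a separation (closed-range) hypothesis on the $\delbar$-operator in an adjacent degree; by the separation theorem of Laurent-Thi\'ebaut and Leiterer \cite{LT-sepa}, the group $H^{0,1}_0(X,E)$ is Hausdorff and dual to $H^{2,1}(X,E^{-1})$ precisely when $H^{2,2}(X,E^{-1})$ is Hausdorff. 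This is the hypothesis that actually has to be checked, and it is not ``routine'': the paper verifies it by proving the vanishing $H^{2,2}(X,E^{-1})=0$, using that $X$ is strongly $2$-complete and invoking the Andreotti--Grauert-type vanishing theorem \cite[p.~419, Proposition 4.1]{agbook}. (It also records $H^{0,0}_0(X,E)=0$, by the identity theorem, for the other degree entering the duality statement.)

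Your paragraph on non-Hausdorffness addresses the wrong group. That $H^1(X,E^{-1})$ may fail to be Hausdorff is indeed harmless, for essentially the reason you give; but the hypothesis of the duality theorem concerns the top-degree group $H^{2,2}(X,E^{-1})$ (equivalently, closed range of $\delbar$ there), and your argument as written has no means of establishing it --- second countability and triviality of $K_X$ do not suffice, and for a general non-compact surface the pairing you write down need not be perfect. Once the strong $2$-completeness of $X$ and the resulting vanishing of $H^{2,2}(X,E^{-1})$ are supplied, the rest of your argument matches the paper's.
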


\begin{proof}
We regard the cohomology $H^1(X,E^{-1})$ as the quotient space $\Ker\ (\delbar: \mathscr{K}^{0,1}(E^{-1}) \to \mathscr{K}^{0,2}(E^{-1}))/ \delbar\mathscr{K}^{0,0}(E^{-1})$, where $\mathscr{K}^{p,q}(E^{-1})$ is the space of $E^{-1}$-valued $(p,q)$-currents (see Remark \ref{cohom:equiv}). 
Since $X$ is strongly $2$-complete (see \cite[p.417, Theorem $3.5$]{agbook}), we find that $H^{2,2}(X,E^{-1})=0$ by \cite[p.419, Proposition $4.1$]{agbook}. 
Moreover, we have $H^{0,0}_0 (X ,E)=0$ by identity theorem. 
By applying Serre duality theorem \cite[p.39]{OhL}, we find that $H^{0,1}_0(X,E)\cong(H^{2,1}(X,E^{-1}))^\vee$ (for their topologies, see Remark $\ref{top:cohom}$). 
Since the canonical bundle $K_X$ of $X$ is trivial,
\begin{align*}
H^1_0(X,E)&\cong(H^{2,1}(X,E^{-1}))^\vee \\
&=(H^{0,1}(X,K_X\otimes E^{-1}))^\vee =(H^1(X,E^{-1}))^\vee. 
\qedhere
\end{align*}
\end{proof}

\begin{remark}\label{top:cohom}
In the proof of Claim $\ref{serre:suffi}$, 
we topologize the space of currents and its subspaces by the topology of the uniform convergence on bounded sets, and the cohomology of currents by the quotient topology. 
On the other hand, the cohomology for $C^\infty$ forms with compact support has the quotient topology which comes from the topology of the uniform convergence of all the derivatives of coefficients with uniformly bounded supports for the space of $C^\infty$ forms with compact support.
\end{remark}

\subsection{Proof of Theorem \ref{thm:main}}\label{pf:main}
Let $X$ and $E$ as in Theorem $\ref{thm:main}$.
Take $T\in (H^1(X,E))^\vee$, namely,  $T: Z^{0,1}(E)/B^{0,1}(E)\to \C$ is a linear and continuous function. 
Denote by $\tilde{T}:Z^{0,1}(E) \to \C$ the lift of $T$. 
By definition of $\tilde{T}$, we have $\tilde{T}(B^{0,1}(E))=0$. 
Moreover,  $\tilde{T}\left(\bar{B^{0,1}(E)}\right)=0$ since $\tilde{T}$ is continuous. 
Now, $\bar{B^{0,1}(E)}=Z^{0,1}(E)$ holds by Proposition \ref{prop:dense}. 
Thus, $\tilde{T}(w)=0$ for all $w\in Z^{0,1}(E)$. 
Hence, we find that $T\equiv 0$, that is, $(H^1(X,E))^\vee=0$. 
Therefore we have $H_0^1(X,E)=0$ by Claim $\ref{serre:suffi}$. \qed

\begin{remark}
For a pair $(X,E)$ which satisfies the conditions $(i)$, $(ii)$, and $(iii)$ as in \S $\ref{notation:general}$, $(H^1(X,E))^\vee=0$. 
In fact, by a similar argument as in the proof of Theorem \ref{thm:main}, the vanishing of $(H^1(X,E))^\vee$ follows from the density of $B^{0,1}(E)$ . 
\end{remark}

\section{Another Proof of Theorem \ref{thm:main} when $p=\theta_1=0$}\label{section:proof}


\subsection{Notation and Settings}\label{notation}
In this subsection, we describe the notation and the settings which we will use in this section.

Let $X=X_{\tau, 0, q}$ and $E=E_{0, \theta_2}$ as in Example \ref{eg:main}. Namely, for $\tau \in \mathbb{H}$ and $q \in \R$, we define a complex Lie group $X$ by
\begin{align}
	X &= \C^2/\left\langle \begin{pmatrix} 0\\ 1\\ \end{pmatrix} , \begin{pmatrix} 1\\ 0\\ \end{pmatrix} , \begin{pmatrix} \tau\\ q\\ \end{pmatrix} \right\rangle. \nonumber
\end{align}
We denote by $(z,w)$ the coordinate of $\C^2$ and by $(\xi,\eta) $ the coordinate of $(\C^\ast)^2$ defined by $\xi = e^{2 \pi \sqrt{-1} z}$ and $ \eta=e^{2\pi \sqrt{-1} w}$. 
We set $\lam:= e^{2\pi \sqrt{-1} \tau}$ and $\mu:=e^{2\pi \sqrt{-1} q}$. 
Define $\sigma \in \rm{Aut}((\C^\ast)^2)$ by letting $ \sigma(\xi , \eta) := (\lam \xi , \mu \eta)$. 
Let $p \colon (\C^\ast)^2 \to (\C^\ast)^2/\Z$ be the quotient map induced by $\sigma$. 
We can regard the quotient $(\C^\ast)^2/\Z$ as $X$.
Denote by $C$ the elliptic curve $\C/\left\langle 1, \tau \right\rangle$. 
Let $\pi \colon X \to C$ be the mapping induced by the first projection $\C^2 \to \C \ ; (z,w)\mapsto z$. 
Note that $\pi \colon X\to C$ can be regarded as a $\C^\ast$-fiber bundle structure on $C$. 
Let $E_C$ be the flat line bundle which is constructed from the unitary representation
\begin{align*}
	\rho \colon \left\langle 1,\tau \right\rangle \to {\rm U}(1)\hsp ; 1 \mapsto 1 \ \text{and}\  \tau \mapsto \nu := e^{2\pi \i \theta_2} \hsp ( \, \theta_2 \in \R).
\end{align*}
Note that $E$ can be regarded as the flat line bundle $\pi^\ast E_C$ on $X$. 
Moreover, let $F_C$ be the flat line bundle which is constructed from the unitary representation
\begin{align*}
	\rho' \colon \left\langle 1,\tau \right\rangle \to {\rm U}(1)\hsp ; 1 \mapsto 1 \ \text{and}\ \tau \mapsto \mu .
\end{align*}
For $n\in\Z$, $D_n$ denotes the set $\{(\xi,\eta)\in(\C^\ast)^2 \mid |\lam|^{n+1}\leq |\xi| \leq |\lam|^n \}$. Note that $D_n=\sigma^n(D_0)$ holds.


\subsection{Observation}\label{obs}
We use H\"{o}rmander estimate for the proof in \S\ref{section:proof} since it is difficult to confirm the convergence of $\hat{g}$ on $(\C^\ast)^2$ such that $\delbar\hat{g}=\tilde{f}$ and $\sigma^\ast\hat{g}=\nu\hat{g}$. 
In this subsection, we attempt to construct such a section $\hat{g}$ without H\"{o}rmander estimate and observe the difficulty.

Let $f\in\Gamma(X,\mathscr{A}^{0,1}_{C^\infty}(E))$ be a $(0,1)$-form with compact support such that $\delbar f=0$, where $\mathscr{A}^{p,q}_{C^\infty}(E)$ is the sheaf of $E$-valued $(p,q)$-forms of class $C^\infty$ on $X$. Then the $(0,1)$-form $\tilde{f}:=p^\ast f$ on $(\C^\ast)^2$ satisfies the periodic condition $\sigma^\ast\tilde{f}=\nu\tilde{f}$.

By Steinness of $(\C^\ast)^2$ and the coherence of $\mathcal{O}_{(\C^\ast)^2}$, we obtain $H^1((\C^\ast)^2,\mathcal{O}_{(\C^\ast)^2})=0$. 
Hence, there exists $\hat{g}\in \Gamma((\C^\ast)^2, \mathscr{A}_{C^\infty}^{0,0}(E))$ such that $\delbar \hat{g}=\tilde{f}$. 
This argument does not depend on a metric or a weight function. 
If $\hat{g}$ satisfies the periodic condition $\sigma^\ast\hat{g}=\nu\hat{g}$, we can take a section $g\in \Gamma(X,\mathscr{A}_{C^\infty}^{0,0}(E))$ such that $\delbar g=f$. 
In the following, we attempt to construct a new solution $\tilde{g}\in\Gamma((\C^\ast)^2,\mathscr{A}_{C^\infty}^{0,0}(E))$ of the $\delbar$-equation which satisfies the periodic condition by modifying $\hat{g}$.

Let $\hat{g}$ be as above. Then, $F:= \nu^{-1}\sigma^\ast \hat{g} - \hat{g}$ is holomorphic on $(\C^\ast)^2$ since the equations
\begin{align*}
\delbar F= \delbar(\nu^{-1}\sigma^\ast \hat{g}-\hat{g}\,) =\nu^{-1}\sigma^\ast(\delbar\hat{g}\,)-\tilde{f}=\nu^{-1}\sigma^\ast\tilde{f}-\tilde{f}=0.
\end{align*}
hold.

We want to construct a holomorphic function $G \colon (\C^\ast)^2 \to \C$ which satisfies $\sigma^\ast G-\nu G \equiv \nu F$, since then $\tilde{g}:=\hat{g}-G$ satisfies 
\begin{align*}
\left\{ \begin{array}{ll} \delbar\tilde{g}=\tilde{f}, \\ \sigma^\ast\tilde{g}=\nu\tilde{g}. \end{array} \right.
\end{align*}

We attempt to construct such a function $G$. 
For $\xi \in \C^\ast$, the holomorphic function $F(\xi,-)\colon \C^\ast \to \C$ has  Laurent expansion 
\begin{align*}
F(\xi,\eta)=\sum_{n=-\infty}^\infty a_n(\xi)\eta^n,
\end{align*}
where $\textstyle a_n(\xi):=\frac{1}{2\pi \i}\int_\gamma \frac{F(\xi,\eta)}{\eta^{n+1}}d\eta$ and $\gamma$ is a smooth Jordan curve around $\eta=0$. Now $a_n$ is holomorphic in $\xi$. Moreover, each $a_n$ has Laurent expansion
\begin{align*}
a_n(\xi)=\sum_{m=-\infty}^\infty a_{n,m}\xi^m.
\end{align*}
Let us define a formal power series $G(\xi,\eta):=\sum_{n,m=-\infty}^\infty\frac{\nu}{\lam^m\mu^n-\nu}\cdot a_{n,m}\xi^m\eta^n$. 
Then
\begin{align*}
\sigma^\ast G(\xi,\eta)-\nu G(\xi,\eta)&=G(\lam\xi,\mu\eta)-\nu G(\xi,\eta)\\
&=\sum_{n,m=-\infty}^\infty \left(\frac{\lam^m\mu^n\nu}{\lam^m\mu^n-\nu}-\frac{\nu^2}{\lam^m\mu^n-\nu}\right)\cdot a_{n,m}\xi^m\eta^n\\
&=\nu F(\xi,\eta)
\end{align*}
formally holds.
Therefore, we obtain
\begin{align*}
\sigma^\ast G-\nu G=\nu F = \sigma^\ast\hat{g}-\nu\hat{g}
\end{align*}
holds formally.

It suffices to prove the convergence of the formal power series $G$. However, it is difficult to prove the convergence since the estimates of the coefficients depend on number theoretical conditions for $\tau, q$, and $\theta_2$. In this section, we construct $\hat{g}$ by using an appropriate metric and a weight function to avoid this difficulty. 


\subsection{Proof of Theorem \ref{thm:main}}\label{pf:sub}

In this subsection, we prove Theorem \ref{thm:main}. 
We use the notation \S $\ref{notation}$ and \S $\ref{obs}$.

First, let us fix a weight function and a metric. 
We define a weight function as
\begin{align*}
\psi(\xi,\eta):= (\log |\xi|^2)^2+\log\left(|\eta|^2+\frac{1}{|\eta|^2}\right).
\end{align*}
Then $\psi$ is a strongly plurisubharmonic function on $(\C^\ast)^2$. 
Note that 
\begin{align}\label{ddbarpsi}
\sqrt{-1}\ddbar\psi &= \sqrt{-1} \frac{2}{|\xi|^2}d\xi \wedge d\bar{\xi} + \sqrt{-1}\frac{4}{|\eta|^2 \left(|\eta|^2+\frac{1}{|\eta|^2}\right)^2} d\eta\wedge d\bar{\eta}.
\end{align}
We define a volume form $d\tilde{\lam}$ by letting
\begin{align*}
d\tilde{\lam}:=\sqrt{-1}\ddbar\left(\frac{(\log |\xi|^2)^2}{2}\right)\wedge\ddbar\left( |\eta|^2+\frac{1}{|\eta|^2} \right),
\end{align*}
and a K\"{a}hler form $\omega$ by letting
\begin{align*}
\omega&:=\sqrt{-1}\ddbar\left(\frac{(\log |\xi|^2)^2}{2}\right)+\sqrt{-1}\ddbar\left(|\eta|^2+\frac{1}{|\eta|^2}\right)\\
&= \sqrt{-1} \frac{1}{|\xi|^2} d\xi \wedge d\bar{\xi} + \sqrt{-1}\left( 1+\frac{1}{|\eta|^4} \right) d\eta\wedge d\bar{\eta}.
\end{align*} 
Then, $d\tlam$ is a volume form of $\omega$. 
For $\psi$ and $\omega$ defined as above, we show the following claim. 

\begin{claim}\label{claim1}
\begin{align*}
\int_{(\C^\ast)^2} \left| \tilde{f}\, \right|^2_\omega e^{-\psi} d\tlam <\infty.
\end{align*}
\end{claim}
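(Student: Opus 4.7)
The plan is to decompose $(\C^\ast)^2=\bigsqcup_{n\in\Z}D_n$, use the $\sigma$-equivariance of every piece of geometric data to rewrite each $\int_{D_n}$ as an integral over $D_0$ with a weight shifted by $\sigma^n$, and then exploit the Gaussian decay of that shifted weight in $n$.

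First I will verify that $\omega$, $d\tlam$, and $|\tilde{f}|^2_\omega$ are all $\sigma$-invariant. For $\omega$ and $d\tlam$ this reduces to checking that $\sqrt{-1}\,\ddbar\bigl((\log|\xi|^2)^2/2\bigr)$ and $\sqrt{-1}\,\ddbar(|\eta|^2+|\eta|^{-2})$ are unchanged under $\sigma$; the second is immediate from $|\mu|=1$, while for the first,
\begin{align*}
\sigma^\ast\bigl((\log|\xi|^2)^2/2\bigr)-(\log|\xi|^2)^2/2=(\log|\lam|^2)\log|\xi|^2+\tfrac{1}{2}(\log|\lam|^2)^2
\end{align*}
is pluriharmonic on $\C^\ast$ and hence annihilated by $\ddbar$. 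Once $\sigma^\ast\omega=\omega$ is in hand, the identity $\sigma^\ast\tilde{f}=\nu\tilde{f}$ combined with $|\nu|=1$ forces $|\tilde{f}|^2_\omega$ to be $\sigma$-invariant as a function, so $|\tilde{f}|^2_\omega\,d\tlam$ is a $\sigma$-invariant measure on $(\C^\ast)^2$.

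Using $D_n=\sigma^n(D_0)$ and pulling each piece back by $\sigma^n$, I then obtain
\begin{align*}
\int_{(\C^\ast)^2}|\tilde{f}|^2_\omega\,e^{-\psi}\,d\tlam=\sum_{n\in\Z}\int_{D_0}|\tilde{f}|^2_\omega\cdot e^{-\psi\circ\sigma^n}\,d\tlam,
\end{align*}
where $\psi\circ\sigma^n(\xi_0,\eta_0)=(2n\log|\lam|+\log|\xi_0|^2)^2+\log(|\eta_0|^2+|\eta_0|^{-2})$. Because $|\mu|=1$, the function $|\eta|$ descends to a continuous function on $X$; together with compactness of $\Supp f\subset X$ and $|\xi|\in[|\lam|,1]$ on $D_0$, this forces $K_0:=p^{-1}(\Supp f)\cap D_0$ to be compact in $(\C^\ast)^2$. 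On $K_0$, both $|\tilde{f}|^2_\omega$ and $(|\eta_0|^2+|\eta_0|^{-2})^{-1}$ are bounded, $d\tlam$ has finite mass, and $|\tilde{f}|^2_\omega$ vanishes off $K_0$, so $M:=\int_{D_0}|\tilde{f}|^2_\omega\,(|\eta_0|^2+|\eta_0|^{-2})^{-1}\,d\tlam<\infty$ and serves as an $n$-independent bound.

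Finally, setting $\alpha:=-2\log|\lam|=4\pi\Im\tau>0$ and using $\log|\xi_0|^2\in[-\alpha,0]$ on $D_0$, one checks that $(2n\log|\lam|+\log|\xi_0|^2)^2$ grows like $n^2\alpha^2$ as $|n|\to\infty$, so the $n$-th summand is bounded above by $M\,e^{-cn^2}$ for some $c>0$ and $|n|$ large. The resulting Gaussian series converges, proving the claim. The only step needing care beyond direct computation is the compactness of $K_0$, which rests precisely on $|\eta|$ being a well-defined function on $X$.
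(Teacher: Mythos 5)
Your proof is correct and follows essentially the same route as the paper's: decompose $(\C^\ast)^2$ into the fundamental domains $D_n=\sigma^n(D_0)$, use $\sigma$-invariance of $\omega$, $d\tlam$, and $\left|\tilde{f}\,\right|^2_\omega$ to reduce to integrals over $D_0$ against the shifted weight $e^{-\psi\circ\sigma^n}$, and sum using the quadratic growth of $(\log|\xi|^2+n\log|\lam|^2)^2$. You are in fact somewhat more careful than the paper on two points it leaves implicit, namely the pluriharmonicity argument for $\sigma^\ast\omega=\omega$ and the compactness of $p^{-1}(\Supp f)\cap D_0$ guaranteeing that the $D_0$-integral is finite.
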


\begin{proof}
For $n\in\Z$, each domain $D_n$ satisfies $D_n=\sigma^n(D_0)$. 
Thus, we obtain the equation
\begin{align}
\int_{(\C^\ast)^2} \left|\tilde{f}\, \right|_\omega^2 e^{-\psi} d\tlam&=\sum_{n=-\infty}^\infty \int_{D_n} \left| \tilde{f}\, \right|^2_\omega e^{-\psi}d\tlam \nonumber\\
&=\sum_{n=-\infty}^\infty \int_{D_0} (\sigma^n)^\ast \left( \left| \tilde{f}\, \right|^2_\omega e^{-\psi} d\tlam \right).\label{sumeq}
\end{align}
Since the K\"{a}hler form $\omega$ is $\sigma$-invariant, the measure $d\tlam$ is also $\sigma$-invariant. 
Thus, we have
\begin{align*}
\sigma^\ast \left| \tilde{f}\, \right|^2_\omega= \left| \sigma^\ast\tilde{f}\, \right|^2_{\sigma^\ast \omega}= \left| \nu\tilde{f}\, \right|^2_\omega= \left| \tilde{f}\, \right|^2_\omega.
\end{align*}
By the equation $(\ref{sumeq})$ and definition of $\psi$, 
\begin{align*}
\sum_{n=-\infty}^\infty \int_{D_0} (\sigma^n)^\ast \left( \left| \tilde{f}\, \right|^2_\omega e^{-\psi} d\tlam \right) &=\sum_{n=-\infty}^\infty \int_{D_0} \left|\tilde{f}\, \right|^2_\omega e^{-\psi(\lam^n \xi, \mu^n \eta)}d\tlam\\
&=\sum_{n=-\infty}^\infty \int_{D_0} \left|\tilde{f}\, \right|^2_\omega e^{-(\log|\xi|^2+n\log|\lam|^2)^2} \frac{1}{|\eta|^2+\frac{1}{|\eta|^2}} d\tlam
.\end{align*}
Now, the inequality
\begin{align*}
(\log|\xi|^2+n\log|\lam|^2)^2\geq \left\{ \begin{array}{ll} ((n+1)\log|\lam|^2)^2 & (n< 0)\\ (n\log|\lam|^2)^2 & (n\geq 0)\\ \end{array} \right.
\end{align*}
holds on $D_0$. 
Therefore, 
\begin{align}
&\sum_{n=-\infty}^\infty \int_{D_0} \left|\tilde{f}\, \right|^2_\omega e^{-(\log|\xi|^2+n\log|\lam|^2)^2} \frac{1}{|\eta|^2+\frac{1}{|\eta|^2}} d\tlam \\
&\leq \sum_{n\geq 0} \int_{D_0}\left| \tilde{f}\, \right|^2_\omega e^{-n^2(\log|\lam|^2)^2} \frac{1}{|\eta|^2+\frac{1}{|\eta|^2}} d\tlam 
+\sum_{n<0} \int_{D_0} \left| \tilde{f}\, \right|^2_\omega e^{-(n+1)^2(\log|\lam|^2)^2} \frac{1}{|\eta|^2+\frac{1}{|\eta|^2}} d\tlam \nonumber \\
&\leq 2 \sum_{n\geq 0} \left(e^{(\log|\lam|^2)^2}\right)^{-|n|} \int_{D_0} \left| \tilde{f}\, \right|^2_\omega  \frac{1}{|\eta|^2+\frac{1}{|\eta|^2}} d\tlam.\nonumber \\
&\leq \sum_{n\geq 0} \left(e^{(\log|\lam|^2)^2}\right)^{-|n|} \int_{D_0} \left| \tilde{f}\, \right|^2_\omega d\tlam. \label{eqn311}
\end{align}
Let $M$ be a positive number such that $\textstyle \int_{D_0} \left| \tilde{f}\, \right|^2_\omega d\tlam <M$. By the equation $(\ref{eqn311})$, we have
\begin{align*}
 \sum_{n\geq 0} \int_{D_0} \left| \tilde{f}\, \right|^2_\omega \left(e^{(\log|\lam|^2)^2} \right)^{-|n|} d\tlam &<   \sum_{n\geq 0} M \left(e^{(\log|\lam|^2)^2}\right)^{-|n|}\\
&<\infty. \qedhere
\end{align*}
\end{proof}

We take a sufficiently large constant $K$ such that $\Supp \tilde{f} \subset \{ (\xi,\eta)\in (\C^\ast)^2 \mid |\log|\eta||<K \}.$ 
Now,
\begin{align*}
1+\frac{1}{|\eta|^4},\hsp \frac{1}{|\eta|^2(|\eta|^2+\frac{1}{|\eta|^2})^2}
\end{align*}
are bounded on $\{ |\log|\eta||<K \}$. 
Hence, the eigenvalue of the complex Hessian of $\i\ddbar\psi$ with $\omega$ is identically to $2$ in the direction $\del/\del\xi$, and is positive and bounded form above in the direction $\del/\del\eta$. 
Then there exists a constant $\delta>0$ such that $\delta$ is smaller than the minimal eigenvalue of $\psi$. 
We have  the following lemma.

\begin{lemma}\label{lemmaHor}
\begin{align*}
\int_{(\C^\ast)^2} \left|\tilde{f}\,\wedge \frac{d\xi}{\xi}\wedge\frac{d\eta}{\eta}\right|^2_\omega e^{-\psi} d\tlam <\infty.
\end{align*}
\end{lemma}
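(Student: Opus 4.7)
The plan is to reduce Lemma \ref{lemmaHor} to Claim \ref{claim1} by a pointwise bound. The key observation is that $\tilde{f}$ is of type $(0,1)$ while $\frac{d\xi}{\xi}\wedge\frac{d\eta}{\eta}$ is of top holomorphic type on the complex surface $(\C^\ast)^2$, so wedging with the latter should act as multiplication by the scalar $\bigl|\tfrac{d\xi}{\xi}\wedge\tfrac{d\eta}{\eta}\bigr|^2_\omega$ in each antiholomorphic direction, yielding the pointwise identity
\begin{align*}
\left|\tilde{f}\wedge\frac{d\xi}{\xi}\wedge\frac{d\eta}{\eta}\right|^2_\omega = |\tilde{f}|^2_\omega\cdot \left|\frac{d\xi}{\xi}\wedge\frac{d\eta}{\eta}\right|^2_\omega.
\end{align*}
To verify this, I would choose the unitary coframe $\tilde{e}_1 := |\xi|^{-1}d\xi$, $\tilde{e}_2 := \tfrac{\sqrt{1+|\eta|^4}}{|\eta|^2}d\eta$, so that $\omega=\i(\tilde{e}_1\wedge\bar{\tilde{e}}_1+\tilde{e}_2\wedge\bar{\tilde{e}}_2)$; then, writing $\tilde{f} = a_1\bar{\tilde{e}}_1+a_2\bar{\tilde{e}}_2$ and $\frac{d\xi}{\xi}\wedge\frac{d\eta}{\eta}= c\,\tilde{e}_1\wedge\tilde{e}_2$, the orthonormality of the basis $\{\bar{\tilde{e}}_j\wedge\tilde{e}_1\wedge\tilde{e}_2\}_{j=1,2}$ of $\Lambda^{2,1}$ gives $|\tilde{f}\wedge\tfrac{d\xi}{\xi}\wedge\tfrac{d\eta}{\eta}|^2_\omega = (|a_1|^2+|a_2|^2)|c|^2$, which is the required identity.

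Next, I would compute the scalar factor by reading off the dual Kähler metric from the diagonal formula $(\ref{ddbarpsi})$ for $\omega$: one has $|d\xi|^2_\omega = |\xi|^2$ and $|d\eta|^2_\omega = \tfrac{|\eta|^4}{1+|\eta|^4}$, hence $\bigl|\tfrac{d\xi}{\xi}\bigr|^2_\omega = 1$ and $\bigl|\tfrac{d\eta}{\eta}\bigr|^2_\omega = \tfrac{|\eta|^2}{1+|\eta|^4}$. In the same unitary coframe this yields
\begin{align*}
\left|\frac{d\xi}{\xi}\wedge\frac{d\eta}{\eta}\right|^2_\omega = \frac{|\eta|^2}{1+|\eta|^4}\leq \frac{1}{2},
\end{align*}
where the last inequality is AM--GM ($1+|\eta|^4\geq 2|\eta|^2$). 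This bound is global and does not use the support condition on $\tilde{f}$.

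Combining the two displays gives $\bigl|\tilde{f}\wedge\tfrac{d\xi}{\xi}\wedge\tfrac{d\eta}{\eta}\bigr|^2_\omega \leq \tfrac{1}{2}|\tilde{f}|^2_\omega$ pointwise on $(\C^\ast)^2$, so integrating against $e^{-\psi}d\tlam$ and invoking Claim \ref{claim1} finishes the proof. No genuine obstacle arises; the only actual computation is the elementary estimate of $\bigl|\tfrac{d\xi}{\xi}\wedge\tfrac{d\eta}{\eta}\bigr|^2_\omega$. Note that the support assumption $\Supp\tilde{f}\subset\{|\log|\eta||<K\}$ is not needed here, although it remains essential for the subsequent Hörmander-type argument, where the eigenvalues of $\i\ddbar\psi$ with respect to $\omega$ must be controlled from below.
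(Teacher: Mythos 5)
Your proof is correct and follows essentially the same route as the paper: compute $\bigl|\tfrac{d\xi}{\xi}\bigr|^2_\omega=1$ and $\bigl|\tfrac{d\eta}{\eta}\bigr|^2_\omega=\tfrac{|\eta|^2}{1+|\eta|^4}\leq\tfrac12$, pull the resulting constant out of the integral, and invoke Claim \ref{claim1}; you merely make explicit the pointwise factorization in a unitary coframe that the paper leaves implicit. One trivial slip: the dual metric should be read off from the displayed formula for $\omega$ itself, not from $(\ref{ddbarpsi})$ (which gives $\i\ddbar\psi$), though the coefficients you actually use are indeed those of $\omega$.
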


\begin{proof}
By definition of $\omega$,
\begin{align*}
\left| \frac{d\xi}{\xi} \right|^2_\omega =1 ,\quad
\left| \frac{d\eta}{\eta} \right|^2_\omega = \frac{1}{|\eta|^2+\frac{1}{|\eta|^2}} \leq \frac{1}{2}.
\end{align*}
We obtain the following inequality from Lemma \ref{claim1}.
\begin{align*}
\int_{(\C^\ast)^2} \left|\tilde{f}\,\wedge \frac{d\xi}{\xi}\wedge\frac{d\eta}{\eta}\right|^2_\omega e^{-\psi} d\tlam
\leq \frac{1}{2} \int_{(\C^\ast)^2} \left|\tilde{f}\right|^2 e^{-\psi} d\tlam < \infty.
\end{align*}
\end{proof}

By Lemma \ref{lemmaHor} and the existence of such $\delta$, we can apply H\"{o}rmander estimate \cite[Corollary 5.3]{Analy} to 
\begin{align*}
\tilde{f}\,\wedge\frac{d\xi}{\xi}\wedge \frac{d\eta}{\eta}\in\Gamma((\C^\ast)^2,\mathscr{A}_{C^\infty}^{2,1}(E))
\end{align*}
to conclude that there exists
\begin{align*}
\hat{g}\,\frac{d\xi}{\xi}\wedge \frac{d\eta}{\eta} \in\Gamma((\C^\ast)^2,\mathscr{A}_{C^\infty}^{2,0}(E))
\end{align*}
such that $\delbar\,{\hat{g}}=\tilde{f}$ and 
\begin{align}
\int_{(\C^\ast)^2}\left|\hat{g}\,\frac{d\xi}{\xi}\right.&\left.\wedge \frac{d\eta}{\eta}\right|^2_\omega e^{-\psi} d\tlam<\infty.\label{horeq}
\end{align}
By the inequality $(\ref{horeq})$ and Fubini's theorem, there exists a set $U\subset \C^\ast$ with full measure such that 
\begin{align}
\int_{\eta\in\C^\ast} \left| \hat{g}(\xi,\eta) \right|^2 \frac{\sqrt{-1}}{|\eta|^2 \left(|\eta|^2+\frac{1}{|\eta|^2}\right)} d\eta\wedge d\bar{\eta}<\infty \label{gtildeine}
\end{align}
for any $\xi\in U$. 
For $\hat{g}$ we obtained above, we define $F:= \nu^{-1}(\sigma^\ast \hat{g}\,)-\hat{g}$. Since $\delbar \hat{g}=\tilde{f}$ and $\tilde{f}$ satisfies the periodic condition $\sigma^\ast\tilde{f}=\nu\tilde{f}$, we have $\delbar F=0$. 
By the inequality $(\ref{gtildeine})$, we find that 
\begin{align*}
\int_{\eta\in\C^\ast} |F(\xi,\eta)|^2 \frac{\sqrt{-1}}{|\eta|^2 \left(|\eta|^2+\frac{1}{|\eta|^2}\right)} d\eta&\wedge d\bar{\eta}
\leq \int_{\eta\in\C^\ast}  |\nu^{-1}(\sigma^\ast \hat{g}\,)-\hat{g}\,|^2 \frac{\sqrt{-1}}{|\eta|^2 \left(|\eta|^2+\frac{1}{|\eta|^2}\right)} d\eta\wedge d\bar{\eta}\\
&\leq \int_{\eta\in\C^\ast}  \left( |\nu^{-1}(\sigma^\ast \hat{g}\,)|^2+|\hat{g}\,|^2 \right) \frac{\sqrt{-1}}{|\eta|^2 \left(|\eta|^2+\frac{1}{|\eta|^2}\right)} d\eta\wedge d\bar{\eta}\\
\end{align*}
holds for any $\xi\in U$. 
By the equality $|\nu^{-1}(\sigma^\ast \hat{g}\,(\xi,\eta))|=|\hat{g}\,(\lam\xi, \mu\eta)|$ and the inequality $(\ref{gtildeine})$, we find that
\begin{align*}
&\int_{\eta\in\C^\ast} |\nu^{-1}(\sigma^\ast \hat{g}\,)(\xi,\eta)|^2 \frac{\sqrt{-1}}{|\eta|^2 \left(|\eta|^2+\frac{1}{|\eta|^2}\right)} d\eta\wedge d\bar{\eta}\\
&= \int_{\eta\in\C^\ast} |\hat{g}\,(\lam\xi,\mu\eta)|^2 \frac{\sqrt{-1}}{|\eta|^2 \left(|\eta|^2+\frac{1}{|\eta|^2}\right)} d\eta\wedge d\bar{\eta}\\
&< \infty
\end{align*}
for any $\lam\xi \in U$. 
Since the set $U \cap \sigma^\ast (U)$ has full measure and
\begin{align*}
&\int_{\eta\in\C^\ast}  \left( |\nu^{-1}(\sigma^\ast \hat{g}\,)|^2+|\hat{g}\,|^2 \right) \frac{\sqrt{-1}}{|\eta|^2 \left(|\eta|^2+\frac{1}{|\eta|^2}\right)} d\eta\wedge d\bar{\eta}\\
&\leq \int_{\eta\in\C^\ast}  \left( |\hat{g}\,(\lam\xi, \mu\eta)|^2+|\hat{g}\,(\xi,\eta)|^2 \right) \frac{\sqrt{-1}}{|\eta|^2 \left(|\eta|^2+\frac{1}{|\eta|^2}\right)} d\eta\wedge d\bar{\eta}\\
&< \infty
\end{align*}
holds for any $\xi \in U \cap \sigma^\ast (U)$, we obtain the inequality
\begin{align*}
\int_{\eta\in\C^\ast} |F(\xi,\eta)|^2 \frac{\sqrt{-1}}{|\eta|^2 \left(|\eta|^2+\frac{1}{|\eta|^2}\right)} d\eta\wedge d\bar{\eta} < \infty
\end{align*}
for almost every $\xi\in\C^\ast$.


\begin{lemma}\label{Riemannext} 
Let $h(\eta)$ be a holomorphic function on $\C^\ast$. If the inequality
 \begin{align*}
\int_{\eta\in\C^\ast} |h(\eta)|^2 \frac{\sqrt{-1}}{|\eta|^2 \left(|\eta|^2+\frac{1}{|\eta|^2}\right)} d\eta\wedge d\bar{\eta}<\infty
\end{align*}
holds, then $h$ is a constant function.
\end{lemma}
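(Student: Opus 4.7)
The plan is to use the Laurent expansion of $h$ on the annulus $\C^\ast$ together with the rotational invariance of the weight to reduce the finiteness of the integral to a collection of one-variable radial integrals whose convergence can be checked by inspection.

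First I would write $h(\eta)=\sum_{n\in\Z}c_n\eta^n$ as its Laurent series, convergent on $\C^\ast$. Passing to polar coordinates $\eta=re^{\i\theta}$ gives $\sqrt{-1}\,d\eta\wedge d\bar\eta=2r\,dr\,d\theta$, and the weight simplifies nicely: $|\eta|^2(|\eta|^2+|\eta|^{-2})=|\eta|^4+1=r^4+1$. Thus the hypothesis reads
\begin{equation*}
\int_0^{2\pi}\!\!\int_0^\infty |h(re^{\i\theta})|^2\,\frac{2r\,dr\,d\theta}{r^4+1}<\infty.
\end{equation*}

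Next, by Parseval on each circle $|\eta|=r$,
\begin{equation*}
\int_0^{2\pi}|h(re^{\i\theta})|^2\,d\theta=2\pi\sum_{n\in\Z}|c_n|^2 r^{2n},
\end{equation*}
and by Tonelli the hypothesis becomes
\begin{equation*}
4\pi\sum_{n\in\Z}|c_n|^2\int_0^\infty\frac{r^{2n+1}}{r^4+1}\,dr<\infty.
\end{equation*}
The key point is then to inspect each radial integral $I_n:=\int_0^\infty r^{2n+1}/(r^4+1)\,dr$. Near $r=0$ the integrand behaves like $r^{2n+1}$, integrable iff $n>-1$, and near $r=\infty$ it behaves like $r^{2n-3}$, integrable iff $n<1$. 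Consequently $I_n=+\infty$ whenever $n\neq 0$, while $I_0$ is finite (in fact $I_0=\pi/4$).

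Finiteness of the total sum therefore forces $c_n=0$ for every $n\neq 0$, so $h\equiv c_0$ is constant. The calculation is essentially routine once one notices the identity $|\eta|^2(|\eta|^2+|\eta|^{-2})=|\eta|^4+1$; the only mild subtlety to double-check is the application of Fubini/Tonelli to interchange the $\theta$-integral with the infinite sum, which is legitimate because all the summands are non-negative.
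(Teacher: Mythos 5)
Your proof is correct. The identity $|\eta|^2\bigl(|\eta|^2+|\eta|^{-2}\bigr)=r^4+1$, the polar-coordinate form $\sqrt{-1}\,d\eta\wedge d\bar\eta=2r\,dr\,d\theta$, the Parseval identity on circles, and the interchange of sum and integral (legitimate by Tonelli, as all terms are non-negative) are all in order, and your analysis of $I_n=\int_0^\infty r^{2n+1}/(r^4+1)\,dr$ correctly shows $I_n=\infty$ for $n\neq 0$ and $I_0=\pi/4$, forcing $c_n=0$ for $n\neq 0$. This is, however, a genuinely different route from the paper's. The paper observes that the weight is bounded below by a positive constant near $\eta=0$ (since $|\eta|^2+|\eta|^{-2}\leq 2|\eta|^{-2}$ there), so $h$ is $L^2$ with respect to Lebesgue measure near the puncture and extends across $\eta=0$ by Riemann's extension theorem; the substitution $\hat\eta=1/\eta$ gives the same conclusion at $\eta=\infty$, so $h$ extends to a holomorphic function on $\mathbb{P}^1$ and is constant by compactness. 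Your argument is more explicit and self-contained: it identifies precisely which Laurent monomials are square-integrable against the weight, making transparent how the weight is calibrated so that only the constant term survives. The paper's argument is shorter and more conceptual, and would adapt to situations where a global Laurent expansion is not available; yours has the advantage of requiring nothing beyond Parseval and elementary one-variable integrals. Either proof is acceptable here.
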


\begin{proof}
On a sufficiently small neighborhood $U_0$ of $\eta=0$, we have
\begin{align*}
\infty&>\int_{\eta\in\C^\ast} |h(\eta)|^2 \frac{\sqrt{-1}}{|\eta|^2 \left(|\eta|^2+\frac{1}{|\eta|^2}\right)} d\eta\wedge d\bar{\eta}\\
&>\int_{U_0\setminus \{0\}} |h(\eta)|^2 \frac{1}{|\eta|^2}\cdot\frac{\sqrt{-1}}{2{|\eta|^{-2}}}d\eta\wedge d\bar{\eta}\\
&=\int_{U_0\setminus \{0\}} |h(\eta)|^2 \frac{\sqrt{-1}}{2} d\eta\wedge d\bar{\eta}.
\end{align*}
We find that $h(\eta)$ is a holomorphic $L^2$ function with Lebesgue measure around $\eta=0$. By Riemann's extension theorem, $h(\eta)$ induces a holomorphic function extended holomorphically to $\eta=0$. In the same manner, on a sufficiently small neighborhood $U_\infty$ of $\eta=\infty$, we have
\begin{align*}
\infty&>\int_{\hat{\eta}\in\C^\ast} \left|h\left(\frac{1}{\hat{\eta}}\right)\right|^2 |\hat{\eta}|^2 \frac{\sqrt{-1}}{|\hat{\eta}|^2+\frac{1}{|\hat{\eta}|^2}} \frac{d\eta\wedge d\bar{\eta}}{|\hat{\eta}|^4}\\
&>\int_{U_\infty\setminus\{ \hat{\eta}=0\}} \left|h\left(\frac{1}{\hat{\eta}}\right) \right|^2 |\hat{\eta}|^2\cdot\frac{\sqrt{-1}}{2|\hat{\eta}|^{-2}}  \frac{d\eta\wedge d\bar{\eta}}{|\hat{\eta}|^4}\\
&=\int_{U_\infty\setminus\{ \hat{\eta}=0 \}} \left|h\left(\frac{1}{\hat{\eta}}\right) \right|^2 \frac{\sqrt{-1}}{2}d\hat{\eta}\wedge d\bar{\hat{\eta}},
\end{align*}
where $\hat{\eta}:=1/\eta$. 
Then we obtain a holomorphic function extended to $\eta=\infty$. 
Therefore, $h(\eta)$ is  holomorphic on $\mathbb{P}^1$. We find that $h(\eta)$ is a constant function.
\end{proof}

By Lemma \ref{Riemannext}, we find that $F(\xi,-)$ is a constant function on $\mathbb{P}^1$ for almost every $\xi\in\C^\ast$. 
Hence, for Laurent series expansion
\begin{align*}
F(\xi,\eta)=\sum^\infty_{n=-\infty}a_n(\xi)\eta^n
\end{align*}
at almost every $\xi\in\C^\ast$, we obtain $a_n(\xi)\equiv 0$ with $n\neq0$. 
Since each $a_n(\xi)$ is holomorphic, we find that $a_n(\xi)\equiv0 $ for any $\xi\in\C^\ast$ and any $n\in\Z$ with $n\neq0$. 
Therefore $F(\xi,\eta)=a_0(\xi)$.

For Laurent expansion
\begin{align*}
a_0(\xi)=\sum^\infty_{m=-\infty}a_{0,m}\xi^m,
\end{align*}
we define $\textstyle A(\xi):=\sum^\infty_{m=-\infty}\frac{1}{\lam^m\nu^{-1}-1}\cdot a_{0,m}\xi^m$. Then, the following lemma holds. 

\begin{lemma}
The power series $A(\xi)$ is holomorphic on $\C^\ast$.
\end{lemma}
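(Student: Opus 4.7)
The plan is to prove holomorphy of $A(\xi)$ on $\C^\ast$ by showing that the multipliers $c_m := \frac{1}{\lam^m\nu^{-1}-1}$ are uniformly bounded in $m\in\Z$; then the Laurent series defining $A$ converges wherever the Laurent series of $a_0$ does, and $a_0\in\mathcal{O}(\C^\ast)$ has already been established.

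The first step is to verify that every $c_m$ is well-defined. Since $\tau\in\H$, one has $|\lam|=e^{-2\pi\Im\tau}<1$, so for every $m\neq 0$ the modulus $|\lam^m\nu^{-1}|=|\lam|^m$ is not equal to $1$, and hence $\lam^m\nu^{-1}\neq 1$. For $m=0$, the assumption of Theorem \ref{thm:main} applied with $n=0$ in the present restricted setting $p=\theta_1=0$ forces $\theta_2\notin\Z$, so $\nu\neq 1$ and $\nu^{-1}-1\neq 0$.

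Next I will bound $|c_m|$ uniformly. For $m\geq 1$, the reverse triangle inequality gives $|\lam^m\nu^{-1}-1|\geq 1-|\lam|^m\geq 1-|\lam|>0$; for $m\leq -1$ it gives $|\lam^m\nu^{-1}-1|\geq |\lam|^m-1\geq |\lam|^{-1}-1>0$; and $|\nu^{-1}-1|>0$ handles $m=0$. Combining these yields a finite constant $C$ with $|c_m|\leq C$ for all $m\in\Z$; in fact $c_m\to -1$ as $m\to+\infty$ and $c_m\to 0$ as $m\to -\infty$.

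Finally, since $a_0\in\mathcal{O}(\C^\ast)$, its Laurent coefficients satisfy $\limsup_{m\to+\infty}|a_{0,m}|^{1/m}=0$ and $\limsup_{m\to+\infty}|a_{0,-m}|^{1/m}=0$. Because $|c_m a_{0,m}|\leq C|a_{0,m}|$, the same vanishing root-test behaviour holds for the Laurent coefficients of $A$, so the series for $A(\xi)$ converges locally uniformly on $\C^\ast$ and defines a holomorphic function there. The only delicate point is the non-degeneracy of the denominators: the hypothesis $\theta_2\notin\Z$ is essential, since it rules out the $m=0$ resonance, while $|\lam|<1$ automatically takes care of all $m\neq 0$.
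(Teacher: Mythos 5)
Your proof is correct and follows essentially the same idea as the paper's: the multipliers $\frac{1}{\lam^m\nu^{-1}-1}$ stay bounded (tending to $-1$ as $m\to+\infty$ and to $0$ as $m\to-\infty$ because $|\lam|<1$), so the Laurent series of $A$ inherits the convergence of that of $a_0$. Your root-test formulation is in fact slightly more robust than the paper's ratio-test argument (which implicitly divides by $a_{0,m}$), and you also make explicit the $m=0$ non-degeneracy $\nu\neq 1$ coming from $\theta_2\notin\Z$, which the paper leaves implicit.
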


\begin{proof}
By definition of $A(\xi)$, 
\begin{align}
A(\xi)&=\sum^\infty_{m=-\infty}\frac{1}{\lam^m\nu^{-1}-1}\cdot a_{0,m}\xi^m \nonumber\\
&=\sum_{m=-\infty}^0 \frac{1}{\lam^m\nu^{-1}-1}\cdot a_{0,m}\xi^m +\sum_{m=1}^\infty \frac{1}{\lam^m\nu^{-1}-1}\cdot a_{0,m}\xi^m. \label{fml311}
\end{align}
In the power series $(\ref{fml311})$, the first part $\textstyle \sum_{m=-\infty}^0 \frac{1}{\lam^m\nu^{-1}-1}\cdot a_{0,m}\xi^m$ is compactly absolutely convergent by $|\lam|<1$. 
We focus on the convergence of the second part $\textstyle \sum_{m=1}^\infty \frac{1}{\lam^m\nu^{-1}-1}\cdot a_{0,m}\xi^m$. 
We set $A_m:=\frac{a_{0,m}}{\lam^m\nu^{-1}-1}$. 
By the equation
\begin{align*}
\frac{A_{m}}{A_{m+1}}=\frac{a_{0,m+1}}{a_{0,m}}\cdot\frac{\lam^{m}\nu^{-1}-1}{\lam^{m+1}\nu^{-1}-1}
\end{align*}
and definition of $a_{0,m}$, we find that $\lim_{n\to\infty} |A_{m}/A_{m+1}|=\infty$. 
Thus, the second part is also compactly absolutely convergent. 
Therefore, $A(\xi)$ is compactly absolutely convergent.
\end{proof}
For each $(\xi,\eta)\in (\C^\ast)^2$, we have
\begin{align*}
\nu^{-1}(\sigma^\ast A(\xi))-A(\xi)&=\nu^{-1}(A(\lam\xi))-A(\xi)\\
&=\nu^{-1}\left(\sum\frac{1}{\lam^m\nu^{-1}-1}a_{0,m}(\lam\xi)^m\right)-\sum\frac{1}{\lam^m\nu^{-1}-1}a_{0,m}\xi^m\\
&=\sum a_{0,m}\xi^m=a_0(\xi).
\end{align*}
By letting $\tilde{g}:=\hat{g}-A$, $\sigma^\ast\tilde{g}=\nu\tilde{g}$ and $\delbar\tilde{g}=\tilde{f}$ hold. Therefore we obtain $g\in \Gamma(X,\mathscr{A}^{0,0}_{C^\infty}(E))$ such that $\tilde{g}=p^\ast g$ and $\delbar g=f$.

Finally, we prove that the section $g$ has compact support. 
It is sufficient to prove that $g\equiv0$ on $M_K:=\{ |\log|\eta||>K \} \subset X$. 
Moreover, by definition of $M_K$, it suffices to show $H^0(M_K, E)=0$ since $g|_{M_K}$ is a holomorphic section. 
The complex surface $X$ can be identified with the complement of the zero-section in the total space of $F_C$. 
Denote by $\tilde{X}$ what can be constructed by adding the infinity-section to $X$. 
Then $\tilde{X}$ admits a $\mathbb{P}^1$-bundle structure over $C$. 
Let $W^+$ be the infinity-section as a submanifold of $\widehat{M_K^+} :=\{ \log|\eta|>K\} \subset \tilde{X}$ and $W^-$ be the zero-section as a submanifold of $\widehat{M_K^-} :=\{ \log|\eta|<-K\} \subset \tilde{X}$. 
We apply $Z=\widehat{M_K^\pm}$ and $W=W^\pm$ to Lemma \ref{lem:vanishing_of_H0_on_nbhd_of_C}. 
Note that $\textstyle N_{W^\pm/\widehat{M_K^\pm}}\cong F_C^{\mp 1}$. 

\begin{lemma}\label{lem:vanishing_of_H0_on_nbhd_of_C}
Let $E$ be a holomorphic line bundle on a connected complex manifold $Z$ and $W\subset Z$ be a connected compact non-singular hypersurface. 
If $E|_W\otimes N_{W/Z}^{-n}$ on W is a non-trivial holomorphic flat line bundle for all $n\in\Z_{\geq0}$, then $H^0(Z, E)=0$. 
\end{lemma}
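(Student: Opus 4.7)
The plan is the following. Given $s \in H^0(Z, E)$, I will show by induction on $k \geq 1$ that $s$ vanishes along $W$ to order $k$, and then invoke the holomorphic identity principle on the connected manifold $Z$ to conclude that $s \equiv 0$.

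Write $\mathcal{I}_W \subset \mathcal{O}_Z$ for the ideal sheaf of $W$. Since $W$ is a smooth hypersurface, there are canonical isomorphisms $\mathcal{I}_W^k / \mathcal{I}_W^{k+1} \cong (N_{W/Z}^\ast)^{\otimes k} = N_{W/Z}^{-k}$ of holomorphic line bundles on $W$ for every $k \geq 0$, giving the short exact sequences
\[
0 \to E \otimes \mathcal{I}_W^{k+1} \to E \otimes \mathcal{I}_W^k \to E|_W \otimes N_{W/Z}^{-k} \to 0.
\]
Before running the induction I need the auxiliary fact that any non-trivial holomorphic flat line bundle $L$ on a compact connected complex manifold satisfies $H^0(W, L) = 0$; in the paper's applications $W$ is the elliptic curve $C$ and the bundles produced by the hypothesis are unitary flat, so this is exactly the vanishing statement of \cite[Lemma 2.3]{HaKo} already invoked in \S \ref{toroidal}. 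Granting this, the base case $k=1$ comes from the sequence with $k=0$: the restriction of $s$ to $E|_W$ lies in $H^0(W, E|_W) = 0$ (using the hypothesis at $n=0$), hence $s \in H^0(Z, E \otimes \mathcal{I}_W)$. The inductive step is identical: assuming $s \in H^0(Z, E \otimes \mathcal{I}_W^k)$, the image of $s$ in $H^0(W, E|_W \otimes N_{W/Z}^{-k})$ vanishes by the hypothesis at $n=k$, so $s \in H^0(Z, E \otimes \mathcal{I}_W^{k+1})$.

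Consequently $s$ vanishes to infinite order along $W$. Fix any $p \in W$ and choose local holomorphic coordinates together with a local trivialization of $E$ near $p$; then $s$ is represented by a holomorphic function whose full Taylor expansion at $p$ is zero, hence that function vanishes identically on a neighborhood of $p$. Since $Z$ is connected, the identity principle forces $s \equiv 0$ on $Z$, giving $H^0(Z,E) = 0$. The only non-formal input in the argument is the flat-bundle vanishing that makes each step of the induction work, and that is where I expect the main obstacle to lie; it is handled by the standard argument that a non-trivial unitary flat line bundle on a compact K\"ahler manifold admits no non-zero global holomorphic section, which in the paper's setting reduces to \cite[Lemma 2.3]{HaKo}.
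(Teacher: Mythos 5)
Your proposal is correct and takes essentially the same route as the paper: the paper packages your induction as a contradiction with the minimal vanishing order $m={\rm mult}_W\,g$, using the same identification $\mathcal{O}_Z(E)\otimes I_W^m/I_W^{m+1}\cong\mathcal{O}_W(E|_W\otimes N_{W/Z}^{-m})$ and the same vanishing of sections of non-trivial flat line bundles from \cite[Lemma 2.3]{HaKo}. The two formulations (infinite-order vanishing plus the identity principle versus finiteness of the multiplicity) are equivalent, so there is nothing substantive to distinguish them.
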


\begin{proof}
Let $[g]\in H^0(Z, E)$. 
We prove that $g\equiv 0$ by contradiction. 
If $g\not\equiv 0$, the multiplicity $m:={\rm mult}_W\,g$ of zeros of $g$ on $W$ satisfies $0\leq m<\infty$. 
Since we regard $g$ as a section of $\mathcal{O}_Z(E)\otimes I_W^m$ for $m$, we obtain
\begin{align*}
g\ {\rm mod}\ I_W^{m+1}\ \in\ H^0(Z, \mathcal{O}_Z(E)\otimes I_W^m/I_W^{m+1}),
\end{align*}
where $I_W\subset \mathcal{O}_Z$ is the defining ideal sheaf of $W$. 
A sheaf $\mathcal{O}_Z(E)\otimes I_W^m/I_W^{m+1}$ has support only on $W$ by restricting it to $W$, and coincides with $\mathcal{O}_W(E|_W\otimes N_{W/Z}^{-m})$. Thus, we get
\begin{align*}
g\ {\rm mod}\ I_W^{m+1}\ \in\ H^0(W, \mathcal{O}_W(E|_W\otimes N_{W/Z}^{-m})).
\end{align*}
We apply $n=m$ to the assumption. 
Since $E|_W\otimes N_{W/Z}^{-m}$ is a non-trivial unitary flat line bundle, the holomorphic section is identically zero (See \cite[Lemma 2.3]{HaKo} for example). 
Then $H^0(W, \mathcal{O}_W(E|_W\otimes N_{W/Z}^{-m}))=0$. 
Therefore we find that $g$ vanishes at least $m+1$ multiples along $W$:
\begin{align*}
g\ {\rm mod}\ I_W^{m+1}\equiv 0.
\end{align*}
This contradicts definition of the integer $m$.
\end{proof}

Since $A(\xi)$ does not depend on $\eta$ and the inequality $(\ref{gtildeine})$ holds, on $\{ (\xi,\eta) \in (\C^\ast)^2 \mid |\log|\eta||>K \}$, $\tilde{g}(\xi,\eta)$ induces a holomorphic function extended holomorphically to $\eta=0$ and $\infty$ by using Riemann's extension theorem for $\tilde{g}$.
Thus, $g$ is defined on $\widehat{M_K^\pm}$. 
Hence, we find that $g|_{\widehat{M_K^\pm}} \in H^0(\widehat{M_K^\pm}, E)$ since $g|_{M_K}$ is a holomorphic section. 
By applying Lemma $\ref{lem:vanishing_of_H0_on_nbhd_of_C}$ with $Z=\widehat{M_K^\pm}$ and $W=W^\pm$, we have $H^0(\widehat{M_K^\pm}, E)=0$. 
Therefore, we obtain that $g|_{\widehat{M_K^\pm}} \equiv0$. 
\qed





\end{document}